\newtheorem{theorem}{Theorem}
\newtheorem{remark}[theorem]{Remark}
\newtheorem{corollary}[theorem]{Corollary}
\newtheorem{lemma}[theorem]{Lemma}
\newtheorem{definition}[theorem]{Definition}
\newtheorem{proposition}[theorem]{Proposition}
\theoremstyle{nonumberplain}
\newtheorem{proof}{Proof}
\newcommand{\leqnomode}{\tagsleft@true}
\newcommand{\reqnomode}{\tagsleft@false}
\DeclareMathOperator{\sgn}{sgn}
\DeclareMathOperator{\Var}{Var} %apparently DeclareMathOperator has better typesetting than using \rm
\DeclareMathOperator{\Cov}{Cov}
\DeclareMathOperator{\E}{\mathbb{E}}
\def\N{\mathbb{N}}
\def\R{\mathbb{R}}
\def\Z{\mathbb{Z}}
\newcommand{\1}{\mbox{1\hspace{-0.28em}I}}
\newcommand\numberthis{\addtocounter{equation}{1}\tag{\theequation}}
\title{Bivariate FCLT for the Sample Quantile and Measures of Dispersion for Augmented GARCH($p$, $q$) processes}
\author{\Large Marcel Br\"autigam$\dagger$ $\ddagger^{\;\ast}$ and Marie Kratz$\dagger^{\;\ast}$\\[1ex]
\small $\dagger$ ESSEC Business School Paris, CREAR \\ \small $\ddagger$ Sorbonne University, LPSM \\ \small $^{\;\ast}$ LabEx MME-DII}
\date{}
\begin{document}
\maketitle

\begin{abstract}
\noindent
In this paper, we build upon the asymptotic theory for GARCH processes, considering the general class of augmented GARCH($p$, $q$) processes. Our contribution is to complement the well-known univariate asymptotics by providing a joint (bivariate) functional central limit theorem of the sample quantile and the r-th absolute centred sample moment. %{\color{red} But why do you do this?} 
This extends existing results in the case of identically and independently distributed random variables.

\noindent We show that the conditions for the convergence of the estimators in the univariate case suffice even for the joint bivariate asymptotics.
We illustrate the general results with various specific examples from the class of augmented GARCH($p$, $q$) processes and show explicitly under which conditions on the moments and parameters of the process the joint asymptotics hold.

\bigskip
\noindent {\emph 2010 AMS classification}: 60F05; 60F17; 60G10; 62H10; 62H20\\
%60F05: CLT & other weak theorems (in Limit theorems)
%60F17: Functional limit theorems; invariance principles (in Limit theorems)
%60G10: Stationary processes (in Stochastic Processes)
%62H10: Distribution of statistics (in Multivariate analysis)
%62H20: Measures of association (in Multivariate analysis)
{\emph JEL classification}: C13; C14; C30\\[1ex] 
%C13: Estimation: General (in Econometric and Statistical Methods and Methodology: General )
%C14: Semiparametric and Nonparametric Methods: General (in Econometric and Statistical Methods and Methodology: General)
%C30: Multiple Variables: General
\noindent\textit{Keywords}: asymptotic distribution; functional central limit theorem; (augmented) GARCH; correlation; (sample) quantile; measure of dispersion;  (sample) mean absolute deviation; (sample) variance
\end{abstract}

\section{Introduction and Notation}
\label{sec:Intro}
%%%%%%%

Since the introduction of the ARCH and GARCH processes in the seminal papers by Engle, \cite{Engle82}, and Bollerslev, \cite{Bollerslev86}, respectively, various GARCH modifications and extensions have been proposed and their statistical properties analysed (see e.g. \cite{Bollerslev08} for an (G)ARCH glossary).
Conditions for the stationarity of such processes, as well as central limit theorems (CLT) or functional central limit theorems (FCLT) have been obtained in various ways by exploiting the different dependence concepts underlying these GARCH type processes (see the introduction in \cite{Lee14} for references on CLT's under different dependence conditions). 

The limit theorems extend also to different estimators apart from the underlying process itself, as for example: powers of the process (e.g. \cite{Hormann08} for augmented GARCH($1$,$1$); \cite{Berkes08}, \cite{Lee14} for augmented GARCH($p$, $q$)), sample autocovariance and sample variance (e.g. \cite{Mikosch00} for the GARCH($1$,$1$); \cite{Aue06} for augmented GARCH($1$,$1$)), or the sample quantile.

Still, joint asymptotics of such estimators have not been considered yet. 
It is what we are developing in this paper, providing bivariate functional central limit theorems for the sample quantile together with the r-th absolute centred sample moment. This includes the case of the sample variance and also the sample mean absolute deviation around the sample mean (MAD), two well-known and widely used measures of dispersion, extending the results obtained in \cite{Brautigam19_iid} for identically and independently distributed (iid) random variables.

Note that the theoretical questions arised from previous studies in financial risk management, one (see \cite{Brautigam19}) where the correlation between a log-ratio of sample quantiles with the sample MAD is measured using log-returns from different stock indices, the other (see \cite{Zumbach18} and \cite{Zumbach12}) considering the correlation of ‘the realized volatilities with the centred volatility increment’ for different underlying processes.
Thus, we think that those asymptotic results may be of great use for applications in
statistics or other application fields. For instance, coming back to financial risk management and risk measure estimation, we could extend results obtained for the Value-at-Risk, when estimated by the sample quantile, to Expected Shortfall using once again the FCLT.

To cover a broad range of GARCH processes, we focus on so called augmented GARCH($p$, $q$) processes, introduced by Duan in \cite{Duan97}. They contain many well-known GARCH processes as special cases. Previous works on univariate CLT's and stationarity conditions for this class of GARCH processes are, inter alia, \cite{Aue06},\cite{Berkes08},\cite{Hormann08} and \cite{Lee14}.

The structure of the paper is as follows. We present in Section~\ref{sec:asympt_results} the main results about the bivariate FCLT for the sample quantile and the r-th absolute centred sample moment for augmented GARCH($p$,~$q$) processes. 
Then, we present specific examples of well-known GARCH models in Section~\ref{sec:exp} and show how the general conditions in the main result translate for these specific cases.
The proofs are given in Section~\ref{sec:proofs}.

{\sf \bf \large Notation}

We introduce the same notation as in \cite{Brautigam19_iid}: Let $(X_1,\cdots,X_n)$ be a sample of size $n$. Assuming the random variables $X_i$'s have a common distribution, denote their parent random variable (rv) $X$ with parent cumulative distribution function (cdf) $F_X$, (and, given they exist,) probability density function (pdf) $f_X$, mean $\mu$, variance $\sigma^2$, as well as, for any integer $r\geq 1$ the r-th absolute centred moment, $\mu(X,r) := \E[\lvert X - \mu \rvert^r$ and quantile of order $p$ defined as $q_X(p):= \inf \{ x \in \R: F_X(x) \geq p \}$.
We denote the ordered sample by $X_{(1)}\leq ...\leq X_{(n)}$.

We consider the sample estimators of the two quantities of interest, i.e. first the sample quantile for any order $p \in [0,1]$ defined as $ q_n (p) = X_{( \lceil np \rceil )}$, where $\lceil x \rceil =   \min{ \{ m \in \mathbb{Z}  : m \geq x \} }$, $\lfloor x \rfloor =   \max{ \{ m \in \mathbb{Z}  : m \leq x \} }$ and $[x]$, are the rounded-up, rounded-off integer-parts and the nearest-integer of a real number $x \in \R$, respectively. Second, the r-th absolute centred sample moment defined, for $r \in \N$, by
\begin{equation}\label{eq:sampleMDisp}
\hat{m}(X,n,r) := \frac{1}{n} \sum_{i=1}^n \lvert X_i -  \bar{X}_n \rvert^r,
\end{equation}
$\bar{X}_n$ denoting the empirical mean.
Special cases of this latter estimator include the sample variance ($r=2$) 
and the sample mean absolute deviation around the sample mean ($r=1$).

Recall the standard notation $u^T$ for the transpose of a vector $u$ and,
for the signum function, $\displaystyle \sgn(x) := -\1_{(x<0)}+\1_{(x>0)}$.
By $\lvert \cdot \rvert$ we denote the euclidean norm and the usual $L_p$-norm is denoted by $\| \cdot \|_p := \E^{1/p}[ \lvert \cdot \rvert^p]$.
Moreover the notations $\overset{d}\rightarrow$, $\overset{a.s.}\rightarrow$, $\overset{P}\rightarrow$ and $\overset{D_d[0,1]}\rightarrow$ correspond to the convergence in distribution, almost surely, in probability and in distribution of a random vector in the d-dimensional Skorohod space $D_d[0,1]$. Further, for real-valued functions $f,g$, we write $f(x) = O(g(x))$ (as $x \rightarrow \infty)$ if and only if there exists a positive constant $M$ and a real number $x_0$ s.t. $\lvert f(x) \rvert \leq M g(x)$ for all $x \geq x_0$, and $f(x)=o(g(x))$ (as $x \rightarrow \infty$) if for all $\epsilon>0$ there exists a real number $x_0$ s.t. $\lvert f(x) \rvert \leq \epsilon g(x)$ for all $x \geq x_0$. Analogously, for a sequence of rv's $X_n$ and constants $a_n$, we denote by $X_n = o_P(a_n)$ the convergence in probability to 0 of $X_n/a_n$.
%
%\vspace{-5ex}
\section{The Bivariate FCLT}
\label{sec:asympt_results}
%%%%%%%%%%%%

\vspace{-1ex}
Let us introduce the augmented GARCH($p$, $q$) process $X=(X_t)_{t \in \Z}$, due to Duan in~\cite{Duan97}, namely, for integers $p \geq 1$ and $q\geq 0$, $X_t$ satisfies
\begin{align}
X_t &= \sigma_t ~\epsilon_t \label{eq:augm_GARCH_pq_1}
\\ \text{with~} \quad \Lambda(\sigma_t^2) &= \sum_{i=1}^p g_i (\epsilon_{t-i}) + \sum_{j=1}^q c_j (\epsilon_{t-j}) \Lambda(\sigma_{t-j}^2), \label{eq:augm_GARCH_pq_2}
\end{align}
where $(\epsilon_t)$ is a series of iid rv's with mean $0$ and variance $1$, $\sigma_t^2 = \Var(X_t)$ and $\Lambda, g_i, c_j, i=1,...,p, j=1,...,q$, are real-valued measurable functions. 
Also, as in \cite{Lee14}, we restrict the choice of $\Lambda$ to the so-called group of either polynomial GARCH($p$, $q$) or exponential GARCH($p$, $q$) processes (see Figure~\ref{fig:GARCH_nested} in the Appendix):
\[ (Lee) \quad \quad \Lambda(x) = x^{\delta}, \text{~for some~} \delta >0, \quad \text{~or~} \quad \Lambda(x) = \log(x).\]
Clearly, for a strictly stationary solution to~\eqref{eq:augm_GARCH_pq_1} and~\eqref{eq:augm_GARCH_pq_2} to exist, the functions $\Lambda, g_i, c_j$ as well as the innovation process $(\epsilon_t)_{t \in \Z}$ have to fulfill some regularity conditions (see e.g. \cite{Lee14}, Lemma 1).
Alike, for the bivariate FCLT to hold, certain conditions need to be fulfilled; we list them in the following.
\\ First, conditions concerning the dependence structure of the process $X$. We use the concept of $L_p$-near-epoch dependence ($L_p$-NED), 
using a definition due to Andrews in \cite{Andrews88} % p.7 in his paper
but restricted to stationary processes. %as e.g. Wendler11 did too
Let $(Z_n)_{n \in \Z}$, be a sequence of rv's and $\mathcal{F}_s^t =  \sigma(Z_s,...,Z_t)$, for $s \leq t$, the corresponding sigma-algebra. 
Let us recall the $L_p$-NED definition.
%\vspace*{-0.4cm}
\begin{definition}[$L_p$-NED, \cite{Andrews88}]
For $p>0$, a stationary sequence $(X_n)_{n \in \Z}$ is called $L_p$-NED on $\left( Z_n\right)_{ n\in \Z }$ if for $k \geq 0$
\[ \| X_1 - \E[X_1 \vert \mathcal{F}_{n-k}^{n+k}] \|_p \leq \nu(k), \]
for non-negative constants $\nu(k)$ such that $\nu(k) \rightarrow 0$ as $k \rightarrow \infty$.

If $\nu(k)= O(k^{-\tau -\epsilon})$ for some $\epsilon >0$, we say that $X_n$ is $L_p$-NED of size $\left(-\tau\right)$. \\ If $\nu(k)=O(e^{-\delta k})$ for some $\delta>0$, we say that $X_n$ is geometrically $L_p$-NED.
\end{definition}

The second set of conditions concerns the distribution of the augmented GARCH($p$, $q$) process. 
%depending on the specific augmented GARCH(p,q) process and the measure of dispersion estimator.
We impose three different types of conditions as in the iid case (see \cite{Brautigam19_iid}):
First, the existence of a finite $2k$-th moment for any integer $k>0$ for the innovation process $(\epsilon_t)$.
Then, given that the process $X$ is stationary, the continuity or $l$-fold differentiability of its distribution function $F_X$ (at a given point or neighbourhood) for any integer $l> 0$, 
and the positivity of its density $f_X$ (at a given point or neighbourhood). Those conditions are named as:
\begin{align*}
&(M_k) &&\E[\lvert \epsilon_0 \rvert^{2k}] < \infty, %\tag{$M_k$}
\\ &(C_0) && F_X \text{~is continuous}, %\tag{$C_0$}
\\ \phantom{text to make the distance less} &(C_l^{~'}) &&F_X \text{~is~} l\text{-times differentiable,} \phantom{text to make the distance between \& and \& \& less} %\tag{$C_l^{~'}$}
\\ &(P) &&f_X \text{~is positive.} %\tag{$P$}
\end{align*}
The third type of conditions is set on the functions $g_i, c_j, i=1,...,p, j=1,...,q$ of the augmented GARCH($p$, $q$) process of the $(Lee)$ family: Positivity of the functions used and boundedness in $L_r$-norm for either the polynomial GARCH, $(P_r)$, or exponential/logarithmic GARCH, $(L_r)$, respectively, for a given integer $r>0$,\\
\begin{align*}
&(A) && g_i \geq 0, c_j \geq 0, i=1,...,p,~j=1,...,q,
\\ &(P_r) && \sum_{i=1}^p \| g_i(\epsilon_0) \|_r < \infty, \quad \sum_{j=1}^q \| c_j(\epsilon_0) \|_r < 1, 
\\ &(L_r) &&  \E[ \exp(4r \sum_{i=1}^p \lvert g_i(\epsilon_0) \rvert^2)] < \infty, \quad \sum_{j=1}^q \lvert c_j(\epsilon_0) \rvert < 1. 
\end{align*}
Note that condition $(L_r)$ requires the $c_j$ to be bounded functions.

\begin{remark}
By construction, from \eqref{eq:augm_GARCH_pq_1} and~\eqref{eq:augm_GARCH_pq_2}, $\sigma_t$ and $\epsilon_t$ are independent (and $\sigma_t$ is a functional of $(\epsilon_{t-j})_{j=1}^{\infty}$). Thus, the conditions on the moments, distribution and density could be formulated in terms of $\epsilon_t$ only. At the same time this might impose some conditions on the functions $g_i, c_j, i=1,...,p, j=1,...,q$ (which might not be covered by $(A)$, $(P_r)$ or $(L_r)$). Thus, we keep the conditions on $X_t$ even if they might not be minimal.
%hörmann 08 said that the existence of moments of X_t is a restriction on c,g. but maybe this is taken care of (by Lee14) by the conditions P_r and L_r?
\end{remark}

Now, let us state the main result. To ease its presentation we introduce a trivariate normal random vector (functionals of $X$), $(U, V, W)^T$, with mean zero and the following covariance matrix:
\begin{equation*}
    (D)~\left\{\begin{aligned}
\Var(U) &= \Var(X_0) +2 \sum_{i=1}^{\infty} \Cov(X_i, X_0)
\\ \Var(V) &= \Var(\lvert X_0 \rvert^r) +2 \sum_{i=1}^{\infty}  \Cov(\lvert X_i\rvert^r, \lvert X_0 \rvert^r) 
\\ \Var(W) &= \Var \left( \frac{p- \1_{(X_0 \leq q_X(p))}}{f_X(q_X(p))} \right) + 2 \sum_{i=1}^{\infty} \Cov \left( \frac{p- \1_{(X_i \leq q_X(p))}}{f_X(q_X(p))}, \frac{p- \1_{(X_0 \leq q_X(p))}}{f_X(q_X(p))} \right)
\\ &= \frac{p(1-p)}{f_X^2(q_X(p))} + \frac{2}{f_X^2(q_X(p))} \sum_{i=1}^{\infty} \left( \E[\1_{(X_0 \leq q_X(p))} \1_{(X_i \leq q_X(p))} ] -p^2  \right) \notag
\\ \Cov(U,V) &= \sum_{i \in \mathbb{Z}} \Cov(\lvert X_i \rvert^r,X_0) = \sum_{i \in \Z} \Cov(\lvert X_0  \rvert^r, X_i  )
\\ \Cov(U,W) &= \frac{-1}{f_X(q_X(p))} \sum_{i \in \mathbb{Z}} \Cov( \1_{(X_i \leq q_X(p))},X_0) = \frac{-1}{f_X(q_X(p))} \sum_{i \in \Z} \Cov( \1_{(X_0 \leq q_X(p))},X_i )
\\ \Cov(V,W) &= \frac{-1}{f_X(q_X(p))} \sum_{i \in \mathbb{Z}} \Cov(  \lvert X_0 \rvert^r, \1_{(X_i \leq q_X(p))}) =\frac{-1}{f_X(q_X(p))}  \sum_{i \in \Z} \Cov(\lvert X_i \rvert^r, \1_{(X_0 \leq q_X(p))} ).
 \end{aligned}
    \right.
\end{equation*}
\begin{theorem}[bivariate FCLT] \label{thm:augm_GARCH_pq_asympt_qn}
For an integer $r>0$, consider an augmented GARCH($p$, $q$) process $X$ as defined in~\eqref{eq:augm_GARCH_pq_1} and~\eqref{eq:augm_GARCH_pq_2} satisfying condition $(Lee)$, $(C_0)$ at $0$ for $r=1$, and both conditions $(C_2^{~'}), (P)$ at $q_X(p)$.
Assume also conditions $(M_r),
 (A)$, and either $(P_{max(1,r/\delta)})$ for $X$ belonging to the group of polynomial GARCH, or $(L_r)$ for the group of exponential GARCH.
Introducing the random vector 
$T_{n,r}(X) = \begin{pmatrix} q_n(p) -q_X(p) \\ \hat{m}(X,n,r) -m(X,r) \end{pmatrix}$, we have the following FCLT: For $t \in [0,1]$, as $n\to\infty$,
\[  \sqrt{n}~t ~ T_{[nt],r}(X) \overset{D_2[0,1]}{\rightarrow}  \textbf{W}_{\Gamma^{(r)}} (t), \]
where $(\textbf{W}_{\Gamma^{(r)}}(t))_{t \in [0,1]}$ is the 2-dimensional Brownian motion with covariance matrix $\Gamma^{(r)} \in \R^{2\times 2}$ defined for any $(s,t) \in [0,1]^2$ by $\Cov(\textbf{W}_{\Gamma^{(r)}}(t),\textbf{W}_{\Gamma^{(r)}}(s)) = \min(s,t) \Gamma^{(r)}$, where
\begin{align*}
\Gamma_{11}^{(r)}&= \Var(W),
\\ \Gamma_{22}^{(r)} &= r^2 \E[ X_0^{r-1} \sgn(X_0)^r]^2 \Var(U) + \Var(V) - 2r \E[ X_0^{r-1} \sgn(X_0)^r] \Cov(U,V),
\\  \Gamma_{12}^{(r)}&= \Gamma_{21}^{(r)} = -r\E[ X_0^{r-1} \sgn(X_0)^r] \Cov(U,W) + \Cov(V,W),
%\vspace*{-0.5cm}
\end{align*}
$(U, V, W)^T$ being the trivariate normal vector (functionals of $X$) with mean zero and covariance given in $(D)$, all series being absolute convergent.
\end{theorem}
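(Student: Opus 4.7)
The plan is to reduce the bivariate FCLT for $T_{[nt],r}(X)$ to a trivariate FCLT for the partial sums of the stationary triple $\bigl(X_i,\,|X_i|^r,\,\1_{(X_i\leq q_X(p))}\bigr)$ and then apply a continuous linear map to combine the two coordinates. Since $\E[\epsilon_0]=0$ and $X_t=\sigma_t\epsilon_t$, the process has zero mean, which is why the covariances in $(D)$ are written in terms of $X_0$ instead of $X_0-\mu$. For the quantile coordinate, I would first establish a functional Bahadur--Kiefer representation, valid uniformly in $t\in[0,1]$ under $(C_2^{~'})$ and $(P)$ at $q_X(p)$:
\[ \sqrt{n}\,t\,\bigl(q_{[nt]}(p)-q_X(p)\bigr) \;=\; \frac{1}{\sqrt{n}\,f_X(q_X(p))} \sum_{i=1}^{[nt]} \bigl(p - \1_{(X_i\leq q_X(p))}\bigr) \;+\; o_P(1). \]

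For the centred-moment coordinate, I would Taylor-expand $|X_i-\bar X_{[nt]}|^r$ about $\bar X_{[nt]}=0$. A direct differentiation gives $\partial_y|X_i-y|^r\big|_{y=0} = -r\,|X_i|^{r-1}\sgn(X_i)$, and using the identity $X_0^{r-1}\sgn(X_0)^r = |X_0|^{r-1}\sgn(X_0)$ I obtain, uniformly in $t$,
\[ \sqrt{n}\,t\,\bigl(\hat m(X,[nt],r)-m(X,r)\bigr) \;=\; \frac{1}{\sqrt{n}}\sum_{i=1}^{[nt]}\bigl(|X_i|^r - m(X,r)\bigr) \;-\; \frac{r\,\E[X_0^{r-1}\sgn(X_0)^r]}{\sqrt{n}} \sum_{i=1}^{[nt]} X_i \;+\; o_P(1). \]
The quadratic remainder, of order $\bar X_{[nt]}^2\cdot|X_i|^{r-2}$, is controlled by $(M_r)$ together with $\sqrt{n}\bar X_{[nt]}=O_P(1)$; for $r=1$, where $|\cdot|$ is not differentiable at $0$, condition $(C_0)$ at $0$ is used to bound the contribution of $X_i$ close to the origin via the local continuity of $F_X$.

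Both linearisations now depend only on partial sums of bounded or $L_r$-integrable functionals of $X$. To obtain their joint FCLT, I would verify, using the results of \cite{Berkes08} and \cite{Lee14}, that under $(Lee)$, $(A)$, $(M_r)$, and either $(P_{\max(1,r/\delta)})$ or $(L_r)$, the process $X$ is geometrically $L_r$-NED on the iid innovations, and the same holds for $|X|^r$; the indicator $\1_{(X\leq q_X(p))}$ then inherits geometric $L_2$-NED from the smoothness of $F_X$ near $q_X(p)$. An invariance principle for stationary $L_p$-NED sequences on an iid (hence mixing) base then gives weak convergence in $D_3[0,1]$ of the trivariate partial sums to a Brownian motion with Bartlett-type long-run covariance matrix whose entries are exactly those of $(U,V,W)$ in $(D)$; the geometric decay ensures absolute convergence of all the defining series. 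The bivariate limit with the claimed $\Gamma^{(r)}$ then follows by applying the continuous linear map $(u,v,w)\mapsto\bigl(-w/f_X(q_X(p)),\; v - r\,\E[X_0^{r-1}\sgn(X_0)^r]\,u\bigr)$ and invoking Slutsky's lemma to absorb the $o_P(1)$ remainders from the two linearisations.

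The main technical obstacle I anticipate is upgrading the pointwise Bahadur representation for the sample quantile to a functional version uniform in $t$ in the NED-dependent setting, and doing this simultaneously with the $o_P(1)$ control of the Taylor remainder for the centred moment when $r=1$—where the non-differentiability of $|\cdot|$ at $0$ forces the use of $(C_0)$ and a careful localisation near $X_i=0$ rather than a clean pointwise Taylor bound.
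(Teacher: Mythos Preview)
Your proposal is correct and follows essentially the same route as the paper: reduce to a trivariate FCLT for the centered triple $(X_i,\,|X_i|^r,\,\1_{(X_i\le q_X(p))})$, use a Bahadur representation for the quantile and a linearisation for $\hat m(X,n,r)$, then apply a continuous linear map together with Slutsky. The only minor differences in execution are that the paper invokes the specific multivariate FCLT of Aue et al.\ (Theorem~A.1 in \cite{Aue09}) by directly verifying the $\Delta$-dependent approximation condition rather than a generic NED invariance principle, and it packages the linearisation of the $r$-th centred moment as a standalone proposition (via a Segers-type lemma for $r=1$) rather than a Taylor expansion with localisation; your identified technical obstacle about uniformity in $t$ of the Bahadur remainder is handled in the paper only at the level of a Slutsky argument.
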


\begin{remark}
Note that the bivariate FCLT between the sample quantile and the r-th absolute centred sample moment requires exactly the same conditions in comparison to the respective univariate convergence (which might be apparent after having gone through the proof of Theorem~\ref{thm:augm_GARCH_pq_asympt_qn}):

Requiring $(C_2^{~'}), (P)$ at $q_X(p)$ exactly correspond to the conditions for the CLT of the sample quantile of a stationary process which is $L_1$-NED with polynomial rate. 
Further, $(P_{\max{(1, r/\delta)}})$ or $(L_r)$ respectively, together with $(M_r), (A)$ and $(C_0)$ at $\mu$ for $r=1$, are the conditions for the univariate CLT of the r-th centred sample moment for augmented GARCH($p$, $q$) processes.

But this is no surprise, as the multivariate FCLT we apply is exactly based on proving univariate asymptotics and then deducing the multivariate convergence via Cram{\'e}r-Wold and a univariate tightness argument, see the proof of \cite{Aue09}.
\end{remark}
\vspace*{-0.3cm}
Choosing $t=1$ in Theorem~\ref{thm:augm_GARCH_pq_asympt_qn} provides the usual CLT that we state for completeness:
\begin{corollary}
Consider an augmented GARCH($p$, $q$) process as defined in~\eqref{eq:augm_GARCH_pq_1} and~\eqref{eq:augm_GARCH_pq_2}.
Under the same conditions as in Theorem~\ref{thm:augm_GARCH_pq_asympt_qn},
the joint behaviour of the sample quantile $q_n(p)$ (for $p \in (0,1)$) and the $r$-th absolute centred sample moment $\hat{m}(X,n,r)$, is asymptotically bivariate normal:
\begin{equation} %\label{eq:cdf-Q-sigma}
\sqrt{n} \, \begin{pmatrix} q_n (p) - q_X(p) \\ \hat{m}(X,n,r)  - m(X,r) \end{pmatrix} \; \underset{n\to\infty}{\overset{d}{\longrightarrow}} \; \mathcal{N}(0, \Gamma^{(r)}), 
\end{equation}
where the asymptotic covariance matrix $\displaystyle \Gamma^{(r)}=(\Gamma_{ij}^{(r)}, 1\le i,j\le 2)$ is as in Theorem~\ref{thm:augm_GARCH_pq_asympt_qn}.
\end{corollary}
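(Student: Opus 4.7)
The plan is to deduce the Corollary directly from Theorem~\ref{thm:augm_GARCH_pq_asympt_qn} by specializing the functional statement at the single time point $t=1$, so no additional probabilistic machinery beyond the Theorem is required.

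First, I would note that under the hypotheses of the Corollary, all assumptions of Theorem~\ref{thm:augm_GARCH_pq_asympt_qn} are in force, so the process $\sqrt{n}\, t\, T_{[nt],r}(X)$ converges in $D_2[0,1]$ to the 2-dimensional Brownian motion $\mathbf{W}_{\Gamma^{(r)}}$ with covariance $\min(s,t)\,\Gamma^{(r)}$. Convergence in the Skorohod space $D_2[0,1]$ implies convergence of the finite-dimensional distributions at every continuity point of the limit, and $\mathbf{W}_{\Gamma^{(r)}}$ is (almost surely) continuous at $t=1$. Hence evaluating at $t=1$ yields
\[
\sqrt{n}\,T_{n,r}(X) \;=\; \sqrt{n}\cdot 1 \cdot T_{[n\cdot 1],r}(X) \;\overset{d}{\longrightarrow}\; \mathbf{W}_{\Gamma^{(r)}}(1).
\]

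It then remains to identify the law of $\mathbf{W}_{\Gamma^{(r)}}(1)$. By definition of the 2-dimensional Brownian motion with covariance structure $\Cov(\mathbf{W}_{\Gamma^{(r)}}(t),\mathbf{W}_{\Gamma^{(r)}}(s)) = \min(s,t)\,\Gamma^{(r)}$, the value at $t=1$ is a centred Gaussian random vector with covariance matrix $\Gamma^{(r)}$, i.e.\ $\mathbf{W}_{\Gamma^{(r)}}(1) \sim \mathcal{N}(0,\Gamma^{(r)})$. Plugging the explicit entries $\Gamma^{(r)}_{ij}$ from the Theorem into this Gaussian limit delivers the claimed bivariate CLT. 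There is essentially no obstacle: the only points worth checking carefully are that $t=1$ is indeed a continuity point of the limit process (standard for Brownian motion) and that the Skorohod $D_2[0,1]$ convergence does pass to marginal convergence at $t=1$, both of which are textbook facts.
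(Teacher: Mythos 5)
Your proposal is correct and coincides with the paper's own (one-line) argument: the authors simply remark that choosing $t=1$ in Theorem~\ref{thm:augm_GARCH_pq_asympt_qn} yields the CLT, which is exactly your specialization of the functional convergence to the marginal at $t=1$ with $\mathbf{W}_{\Gamma^{(r)}}(1)\sim\mathcal{N}(0,\Gamma^{(r)})$. Your additional remarks on continuity of the evaluation map at $t=1$ in $D_2[0,1]$ are the standard justification the paper leaves implicit.
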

As special case we can also recover the CLT between the sample quantile and the r-th absolute centred sample moment in the iid case, given by Theorem 7 in~\cite{Brautigam19_iid}:
\begin{corollary}
Consider an augmented GARCH($p$, $q$) process as defined in~\eqref{eq:augm_GARCH_pq_1} and~\eqref{eq:augm_GARCH_pq_2}, choosing $g_i,c_j, \Lambda$ such that $\sigma_t^2 = \sigma^2 >0$ is a positive constant for all $t$. 
Under the same conditions as in Theorem~\ref{thm:augm_GARCH_pq_asympt_qn},
the joint behaviour of the sample quantile $q_n(p)$ (for $p \in (0,1)$) and the $r$-th absolute centred sample moment $\hat{m}(X,n,r)$, is asymptotically bivariate normal:
\begin{equation} %\label{eq:cdf-Q-sigma}
\sqrt{n} \, \begin{pmatrix} q_n (p) - q_X(p) \\ \hat{m}(X,n,r)  - m(X,r) \end{pmatrix} \; \underset{n\to\infty}{\overset{d}{\longrightarrow}} \; \mathcal{N}(0, \Gamma^{(r)}), 
\end{equation}
where the asymptotic covariance matrix $\displaystyle \Gamma^{(r)}=(\Gamma_{ij}^{(r)}, 1\le i,j\le 2)$ simplifies to
\begin{align*}
\Gamma_{11}^{(r)}&= \frac{p(1-p)}{f_X^2(q_X(p))}; \quad \Gamma_{22}^{(r)} = r^2 \E[ X_0^{r-1} \sgn(X_0 )^r]^2 \sigma^2 + \Var(\lvert X_0  \rvert^r) - 2r \E[ X_0^{r-1} \sgn(X_0)^r] \Cov(X_0,\lvert X_0  \rvert);
\\  \Gamma_{12}^{(r)}&= \Gamma_{21}^{(r)} = \frac{1}{f_X(q_X(p))} \left( r \E[ X_0^{r-1} \sgn(X_0)^r] \Cov(\1_{(X_0 \leq q_X(p))},X_0) - \Cov(\1_{(X_0 \leq q_X(p))},\lvert X_0 \rvert^r) \right).
\end{align*}
\end{corollary}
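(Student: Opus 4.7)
The plan is to derive this corollary directly from the preceding corollary (equivalently, from Theorem~\ref{thm:augm_GARCH_pq_asympt_qn} evaluated at $t=1$) by specializing to the degenerate case of constant conditional variance. When $g_i, c_j, \Lambda$ are chosen so that $\sigma_t^2 \equiv \sigma^2 > 0$, equation~\eqref{eq:augm_GARCH_pq_1} reduces to $X_t = \sigma\,\epsilon_t$, so $(X_t)_{t\in\Z}$ inherits the iid structure of the innovation sequence. Since the hypotheses assumed are precisely those of Theorem~\ref{thm:augm_GARCH_pq_asympt_qn}, joint asymptotic bivariate normality with limiting covariance $\Gamma^{(r)}$ already follows from that theorem; the entire task is therefore to simplify the six entries of $(D)$ under independence.

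For any measurable $h,g$ with finite second moments, iid-ness of $(X_t)$ yields $\Cov(h(X_i), g(X_0)) = 0$ for every $i \neq 0$. Applied termwise to the six expressions in $(D)$, this collapses each infinite series to its $i=0$ term, so
\[
\Var(U) = \sigma^2, \quad \Var(V) = \Var(|X_0|^r), \quad \Var(W) = \frac{p(1-p)}{f_X^2(q_X(p))},
\]
while the mixed entries reduce to $\Cov(U,V) = \Cov(X_0, |X_0|^r)$, $\Cov(U,W) = -\Cov(\1_{(X_0 \leq q_X(p))}, X_0)/f_X(q_X(p))$, and $\Cov(V,W) = -\Cov(\1_{(X_0 \leq q_X(p))}, |X_0|^r)/f_X(q_X(p))$.

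Substituting these reductions into the three formulas for $\Gamma_{11}^{(r)}, \Gamma_{22}^{(r)}, \Gamma_{12}^{(r)}$ given in Theorem~\ref{thm:augm_GARCH_pq_asympt_qn} produces exactly the claimed simplified entries, after pulling the factor $-1/f_X(q_X(p))$ out of $\Gamma_{12}^{(r)}$ and combining signs. There is no real obstacle in this derivation: it is pure bookkeeping, and the only point worth noting is that the moment, smoothness and function-class conditions imposed on $X_t$ are automatically compatible with the iid specialization, since here $X_t$ is simply a scalar multiple of the innovation $\epsilon_t$ and the functions $g_i, c_j$ can be taken trivially (e.g.\ constant) to enforce $\sigma_t^2 \equiv \sigma^2$.
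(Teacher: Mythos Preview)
Your proposal is correct and is exactly the natural derivation the paper has in mind: the corollary is stated without a separate proof, as an immediate specialization of Theorem~\ref{thm:augm_GARCH_pq_asympt_qn} (via the preceding corollary at $t=1$), and your argument---observing that $X_t=\sigma\epsilon_t$ is iid so that all cross-terms with $i\neq 0$ in $(D)$ vanish, then substituting into the formulas for $\Gamma^{(r)}$---is the intended bookkeeping. One minor remark: your reduction yields $\Cov(X_0,\lvert X_0\rvert^r)$ in the last term of $\Gamma_{22}^{(r)}$, which is what the general formula in Theorem~\ref{thm:augm_GARCH_pq_asympt_qn} dictates; the $\Cov(X_0,\lvert X_0\rvert)$ printed in the corollary appears to be a typo in the paper rather than a discrepancy in your argument.
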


%\begin{corollary}
%\end{corollary}

\paragraph{Idea of the proof - }
%{\color{red} Bold for vectors? (as now)}
Let us briefly describe the idea of the proof of Theorem~\ref{thm:augm_GARCH_pq_asympt_qn}, developed in Section~\ref{sec:proofs}. To prove the FCLT, we need to show that two conditions are fulfilled for the vector $T_{n,r}(X)$. These specific conditions arise from the application of the  multivariate FCLT (Theorem A.1 in \cite{Aue09}, which extends the univariate counterpart from, e.g., Billingsley in~\cite{Billingsley68}) and are the following: %in Aue09 they say Thm 21.1 (they also say so in Hörmann08 before eq(24)
%To use this theorem, w: 
\begin{itemize}
\item[$(H_1)$] 
A representation of $T_{n,r}(X)$ given by the random vector $t_j$ with $\E[t_j] =0$ and $\| t_j^2 \|_2^2 < \infty$, $j=1,...,n$, such that we have, for all natural numbers $n$,
\begin{equation*} %\label{eq:Aue09_representation_of_vector}
\frac{1}{n} \sum_{j=1}^n t_j = T_{n,r}(X) \text{~and~} t_j = f( \epsilon_j, \epsilon_{j-1},...),
\end{equation*}
where $f: \mathbb{R}^{\infty} \rightarrow \mathbb{R}^2$ is a measurable function and $({\epsilon}_j, j \in \mathbb{Z})$ is a sequence of real-valued iid rv's with mean $0$ and variance $1$.
\item[$(H_2)$]
A $\Delta$-dependent approximation of the random vector ${t}_j$ introduced in $(H_1)$, i.e. the existence of a sequence of random vectors $\left({t}_j^{(\Delta)}, j \in \Z\right)$ such that, for any $\Delta \geq 1$, we have
\begin{align*} 
&{t}_j^{(\Delta)} = {f}^{(\Delta)}( {\epsilon}_{j-\Delta},...,{\epsilon}_j, ..., {\epsilon}_{j+\Delta}), \text{~for measurable functions~} {f}^{(\Delta)}: \R^{2\Delta+1} \rightarrow \R^2,  %\label{eq:m_dep_vector_repr_Aue09}
\\ &\text{and} \quad \sum_{\Delta \geq 1} \| {t}_0 - {t}_0^{(\Delta)} \|_2  < \infty.  %\label{eq:m_dep_approx_Aue09}
\end{align*}
\end{itemize} 
Checking the first condition, $(H_1)$, is done in two steps. First, we show why we can use the Bahadur representation of the sample quantile given in \cite{Wendler11} (Theorem 1). Second, we prove a corresponding representation for the r-th absolute centred sample moment (extending results from~\cite{Brautigam19_iid},~\cite{Segers14}). These representations of the sample quantile and r-th absolute centred sample moment then naturally fulfil $(H_1)$. %\eqref{eq:Aue09_representation_of_vector}.

To prove the second condition $(H_2)$, we need to find a $\Delta$-dependent approximation ${t}_j^{(\Delta)}$. For this, we show that the existence of our chosen ${t}_j^{(\Delta)}$ can be reduced to the existence of a $\Delta$-dependent approximation for the process $X_j$ or powers of the process $\lvert X_j \rvert$, for which results in \cite{Lee14} can be used.

\section{Examples} \label{sec:exp}
In this section we review some well-known examples of augmented GARCH($p$, $q$) processes and discuss which conditions these models need to fulfill in order for the bivariate asymptotics of Theorem~\ref{thm:augm_GARCH_pq_asympt_qn} to be valid.

Note that the moment condition on the innovations, $(M_r)$, as well as the continuity and differentiability conditions, $(C_2^{~'})$, $(P)$, each at $q_X(p)$, and $(C_0)$ at $0$ for $r=1$, remain the same for the whole class of augmented GARCH processes. But, depending on the specifications of the process, \eqref{eq:augm_GARCH_pq_1} and~\eqref{eq:augm_GARCH_pq_2}, the conditions, $(P_{\max(1,r/\delta)})$ for polynomial GARCH or $(L_r)$ for exponential GARCH respectively, translate differently in the various examples.

For this, we introduce in Table~\ref{tbl:vol_formula_models} different augmented GARCH($p$,$q$) models by providing for each the corresponding volatility equation, \eqref{eq:augm_GARCH_pq_2}, and the specifications of the functions $g_i$ and $c_j$. 
We consider 10 models which belong to the group of polynomial GARCH ($\Lambda(x) = x^{\delta}$) and two examples of exponential GARCH ($\Lambda(x) = \log(x)$). As the nesting of the different models presented is not obvious, we give a schematic overview in Figure~\ref{fig:GARCH_nested} in the Appendix.
An explanation of the abbreviations for, and authors of, the different models can be found there too. 
Note that the presented selection of augmented GARCH~($p$,$q$) processes is not exhaustive.

Note that in Table~\ref{tbl:vol_formula_models} the specification of $g_i$ is the same for the whole APGARCH family (only the $c_j$ change), whereas for the two exponential GARCH models, it is the reverse.
The general restrictions on the parameters are as follows: $\omega>0, \alpha_i\geq 0 , -1 \leq \gamma_i \leq 1, \beta_j \geq 0$ for $i=1,...,p$, $j=1,...,q$.
Further, the parameters in the GJR-GARCH (TGARCH) are denoted with an asterix (with a plus or minus) as they are not the same as in the other models (see the Appendix for details).
\begin{table}[H]
{\footnotesize
\caption{\label{tbl:vol_formula_models} \sf\small  Presentation of the volatility equation \eqref{eq:augm_GARCH_pq_2} and the corresponding specifications of functions $g_i,c_j$ for selected augmented GARCH models.}
\vspace{-3ex}
\begin{center}
\hspace*{-2.0cm}
\begin{tabular*}{570pt}{l | lll }
\hline
\\[-1.5ex]
 & standard formula for $\Lambda(\sigma_t^2)$ & corresponding specifications of $g_i,c_j$ in~\eqref{eq:augm_GARCH_pq_2} \\
\hline\hline
\\[-1.5ex]
%\hspace*{-5pt}$\alpha=0.95$ \\
\\[-2ex] \textbf{Polynomial GARCH}& & & 
\\[1.5ex]  ~~APGARCH family & $\displaystyle \sigma_t^{2 \delta} = \omega + \sum_{i=1}^p \alpha_i \left( \lvert y_{t-i} \rvert - \gamma_i y_{t-i} \right)^{2 \delta} + \sum_{j=1}^q \beta_j \sigma_{t-j}^{2 \delta}$ & \hspace*{-1.1cm}$g_i = \omega/p \;\text{and}\; c_j =  \alpha_j \left( \lvert \epsilon_{t-j} \rvert - \gamma_j \epsilon_{t-j} \right)^{2 \delta} + \beta_j $ 
%\\\hline 
\\[1.5ex] \quad\quad AGARCH & $\displaystyle\sigma_t^{2} = \omega + \sum_{i=1}^p \alpha_i \left( \lvert y_{t-i} \rvert - \gamma_i y_{t-i} \right)^{2 } + \sum_{j=1}^q \beta_j \sigma_{t-j}^{2}$ & \quad \quad ~$c_j =   \alpha_j \left( \lvert \epsilon_{t-j} \rvert - \gamma_j \epsilon_{t-j} \right)^{2} + \beta_j $
\\[1.5ex] \quad\quad GJR-GARCH & $\displaystyle\sigma_t^2 = \omega +  \sum_{i=1}^p \left( \alpha_i^{\ast} +\gamma_i^{\ast} \1_{(y_{t-i}<0)} \right)y_{t-i}^2 + \sum_{j=1}^q \beta_j \sigma_{t-j}^2 $ & \quad \quad ~$c_j = \beta_j + \alpha_j^{\ast} \epsilon_{t-j}^2 +\gamma_j^{\ast} \max(0,-\epsilon_{t-j})^2   $
\\[1.5ex] \quad\quad GARCH & $\displaystyle\sigma_t^2 = \omega + \sum_{i=1}^p \alpha_i y_{t-i}^2 + \sum_{j=1}^q \beta_j \sigma_{t-j}^2$ & \quad \quad ~$c_j = \alpha_j \epsilon_{t-j}^2 +\beta_j $
\\[1.5ex] \quad\quad ARCH & $\displaystyle\sigma_t^2 = \omega + \sum_{i=1}^p \alpha_i y_{t-i}^2$ & \quad \quad ~$c_j =  \alpha_j \epsilon_{t-j}^2 $
\\[1.5ex] \quad\quad TGARCH & $\displaystyle\sigma_t = \omega + \sum_{i=1}^p \left( \alpha_i^{+} max(y_{t-i},0)  - \alpha_i^{-} min(y_{t-i},0) \right) + \sum_{j=1}^q \beta_j \sigma_{t-j}$ & \quad \quad ~$ c_j = \alpha_j \lvert \epsilon_{t-j} \rvert  - \alpha_j \gamma_j \epsilon_{t-j} + \beta_j  $
\\[1.5ex] \quad\quad TSGARCH & $\displaystyle\sigma_t = \omega + \sum_{i=1}^p  \alpha_i \lvert y_{t-i} \rvert  + \sum_{j=1}^q \beta_j \sigma_{t-j}$ & \quad \quad ~$ c_j = \alpha_j \lvert \epsilon_{t-j} \rvert  + \beta_j $
\\[1.5ex] \quad\quad PGARCH & $\displaystyle\sigma_t^{\delta} = \omega + \sum_{i=1}^p \alpha_i \lvert y_{t-i} \rvert^{\delta}  + \sum_{j=1}^q \beta_j  \sigma_{t-j}^{\delta}.$ & \quad \quad ~$c_j =  \alpha_j \lvert \epsilon_{t-j} \rvert^{\delta}  + \beta_j $
%\\[1.5ex] Other polynomial GARCH & & & 
%\\\hline
\\[1.5ex] ~~VGARCH & $\displaystyle\sigma_t^2 = \omega + \sum_{i=1}^p \alpha_i (\epsilon_{t-i} + \gamma_i)^2 + \sum_{j=1}^q \beta_j \sigma_{t-j}^2.$ & $g_i = \omega/p + \alpha_i (\epsilon_{t-i} + \gamma_i)^2$ \; and \;$c_j = \beta_j$
%\\\hline
\\[1.5ex] ~~NGARCH & $\displaystyle\sigma_t^2 = \omega + \sum_{i=1}^p \alpha_i (y_{t-i} + \gamma_i \sigma_{t-i})^2  + \sum_{j=1}^q \beta_j \sigma_{t-j}^2 $ & $g_i = \omega/p$\; and\; $c_j =  \alpha_j (\epsilon_{t-j} + \gamma_j)^2 +  \beta_j $
\\\hline
\\[-1.5ex] \textbf{Exponential GARCH} & & $c_j = \beta_j$\; and
\\[1.5ex] ~~MGARCH & $\displaystyle\log(\sigma_t^2) = \omega + \sum_{i=1}^p \alpha_i \log(\epsilon_{t-i}^2) + \sum_{j=1}^q \beta_j \log(\sigma_{t-j}^2)$ & ~~~~$g_i = \omega/p + \alpha_i \log(\epsilon_{t-i}^2)$
\\[1.5ex] ~~EGARCH & $\displaystyle\log(\sigma_t^2) = \omega + \sum_{i=1}^p \alpha_i \left(  \lvert \epsilon_{t-i} \rvert - \E\lvert \epsilon_{t-i}\rvert \right) + \gamma_i \epsilon_{t-i}  + \sum_{j=1}^q \beta_j \log(\sigma_{t-j}^2)$ & ~~~~$g_i = \omega/p + \alpha_i (\lvert\epsilon_{t-i}\rvert - \E\lvert \epsilon_{t-i} \rvert) + \gamma_i \epsilon_{t-i}$
\\  [1ex]
\hline
\end{tabular*}
\end{center}
}
%\end{sideways}
\end{table}

In Tables~\ref{tbl:cond_bivariate-conv-11} and~\ref{tbl:cond_bivariate-conv-pq} we present how the conditions $(P_{\max(1,r/\delta)})$ or $(L_r)$ translate for each model.
Table~\ref{tbl:cond_bivariate-conv-11} treats the specific case of an augmented GARCH($p$, $q$) process with $p=q=1$ and is presented here whereas Table~\ref{tbl:cond_bivariate-conv-pq} treats the general case for arbitrary $p\geq1, q \geq0$ and is defered to the Appendix. 
In the first column we consider the conditions for the general r-th absolute centred sample moment, $r \in \N$.
Of biggest interest to us are the specific cases of the sample MAD ($r=1$) and the sample variance ($r=2$) as measure of dispersion estimators respectively, presented in the second and third column.

For the selected polynomial GARCH models the requirement ${\sum_{i=1}^p \| g_i(\epsilon_0) \|_{\max(1,r/\delta)} < \infty}$ in condition $(P_{\max(1,r/\delta)})$ will always be fulfilled. Thus, we only need to analyse the condition ${\sum_{j=1}^q \| c_j (\epsilon_0)  \|_{\max(1,r/\delta)} < 1}$.

Note that in Table~\ref{tbl:cond_bivariate-conv-11} (and also Table~\ref{tbl:cond_bivariate-conv-pq}) the restrictions on the parameter space, given by $(P_{\max(1,r/\delta)})$ or $(L_r)$ respectively, are the same as the conditions for univariate FCLT's of the process $X_t^r$ itself (see \cite{Berkes08}, \cite{Hormann08}). For $r=1$, they coincide with the conditions for e.g. $\beta$-mixing with exponential decay (see \cite{Carrasco02}).

\begin{table}[H]
{\small
\caption{\label{tbl:cond_bivariate-conv-11} \sf\small Conditions $(P_{\max{(1, r/\delta)}})$ or $(L_r)$ respectively translated for different augmented GARCH($1$,$1$) models. Left column for the general r-th absolute centred sample moment, middle for the MAD ($r=1$) and right for the variance ($r=2$).}
\vspace{-3ex}
\begin{center}
\hspace*{-2cm}
\begin{tabular}{l | lll }
\hline
\\[-1.5ex]
augmented \\ GARCH ($1$, $1$) & $r \in \N$ & $r=1$ & $r=2$ \\
\hline\hline
\\[-1.5ex]
%\hspace*{-5pt}$\alpha=0.95$ \\
\\[-2ex] APGARCH &  $ \E[\lvert \alpha_1 \left( \lvert \epsilon_0\rvert - \gamma_1 \epsilon_{t-1}\right)^{2 \delta} + \beta_1 \rvert^r] <1$ & $\alpha_1 \E\left[\left( \lvert \epsilon_0\rvert - \gamma_1 \epsilon_{t-1} \right)^{2\delta } \right]+ \beta_1  <1$ & $ \E[\lvert \alpha_1 \left(  \lvert \epsilon_0\rvert - \gamma_1 \epsilon_{t-1} \right)^{2 \delta} + \beta_1 \rvert^2] <1$
\\[1.5ex] AGARCH & $ \E[\lvert \alpha_1 \left( \lvert \epsilon_0\rvert - \gamma_1 \epsilon_{t-1}\right)^{2 } + \beta_1 \rvert^r] <1$ & $\alpha_1 \E\left[\left( \lvert \epsilon_0\rvert - \gamma_1 \epsilon_{t-1} \right)^{2 } \right]+ \beta_1  <1$ & $ \E[\lvert \alpha_1 \left( \lvert \epsilon_0\rvert - \gamma_1 \epsilon_{t-1}\right)^{2} + \beta_1 \rvert^2]<1$
\\[1.5ex] GJR-GARCH & $\E[\lvert \alpha_1^{\ast} \epsilon_0^2 + \beta_1 + \gamma_1^{\ast} \max(0,-\epsilon_0^2) \rvert^r] <1$ & $\alpha_1^{\ast} + \beta_1 + \gamma_1^{\ast} \E[\max(0,-\epsilon_0)^2] <1$ &  $\E[\lvert \alpha_1^{\ast} \epsilon_0^2 + \beta_1 + \gamma_1^{\ast} \max(0,-\epsilon_0^2) \rvert^2] <1$
\\[1.5ex] GARCH &  $\E[(\alpha_1 \epsilon_0^2 + \beta_1)^r]< 1 $ & $ \alpha_1 + \beta_1 < 1$ & $\alpha_1^2 \E[\epsilon_0^4] + \alpha_1 \beta_1 + \beta_1^2 < 1 $ 
\\[1.5ex] ARCH &  $\alpha_1^r \E[\epsilon_0^{2r}]< 1 $ & $ \alpha_1 < 1$ & $\alpha_1^2 \E[\epsilon_0^4] < 1 $ 
\\[1.5ex] TGARCH &  $\E[\lvert \alpha_1 \lvert \epsilon_{t-1} \rvert - \alpha_1 \gamma_1 \epsilon_{t-1} + \beta_1 \rvert^r] <1 $&  $\alpha_1 \E\lvert \epsilon_{t-1} \rvert + \beta_1 < 1$ &  $\E[\lvert \alpha_1 \lvert \epsilon_{t-1} \rvert - \alpha_1 \gamma_1 \epsilon_{t-1} + \beta_1 \rvert^2] <1$
\\[1.5ex] TSGARCH &  $\E[\lvert \alpha_1 \lvert \epsilon_{t-1} \rvert + \beta_1 \rvert^r] <1 $&  $\alpha_1 \E\lvert \epsilon_{t-1} \rvert + \beta_1 < 1$ &  $\E[\lvert \alpha_1 \lvert \epsilon_{t-1} \rvert + \beta_1 \rvert^2] <1$  
\\[1.5ex] PGARCH & $\E[\lvert \alpha_1 \lvert \epsilon_0 \rvert +\beta_1\rvert^{2r}] < 1$ & $\alpha_1  + 2 \alpha_1 \beta_1 \E\lvert \epsilon_0 \rvert +\beta_1^2 < 1$ & $ \E[\lvert \alpha_1 \lvert \epsilon_0 \rvert +\beta_1\rvert^{4}] < 1$
\\[1.5ex] VGARCH & \multicolumn{3}{c}{for any $r \in \N$: \quad $\beta_1 < 1$}
\\[1.5ex] NGARCH &  $\E[\lvert \alpha_1 (\epsilon_0 + \gamma_1)^2 + \beta_1 \rvert^r] < 1$ & $ \alpha_1 (1 + \gamma_1^2) + \beta_1 < 1$&  $ \E[\lvert \alpha_1 (\epsilon_0 + \gamma_1)^2 + \beta_1 \rvert^2] < 1$
\\[1.5ex] MGARCH & \multicolumn{3}{c}{for any $r \in \N$: \quad $\E[\exp(4r \lvert \omega/p + \alpha_1 \log(\epsilon_0^2)  \rvert^2)] < \infty$ and $\lvert \beta_1 \rvert <1$}
\\[1.5ex] EGARCH & \multicolumn{3}{c}{for any $r \in \N$: \quad $\E[\exp(4r  \lvert \omega/p + \alpha_1 (\lvert\epsilon_0\rvert - \E\lvert \epsilon_0 \rvert) + \gamma_1 \epsilon_0 \rvert^2)] < \infty$ and $\lvert \beta_1 \rvert <1$}
\\  [1ex]
\hline 
\end{tabular}
\end{center}
}
%\end{sideways}
\end{table}

\section{Proofs} \label{sec:proofs}
%%%%%%%%%%%
Before stating the proof of the main theorem, let us start with two auxiliary results.
As it requires some work to find the asymptotics of $\hat{m}(X,n,r)= \frac{1}{n} \sum_{i=1}^n \lvert X_i - \bar{X}_n \rvert^r$ for any integer $r\geq 1$, and such a result is of interest in its own right, we give it separately in Proposition~\ref{prop:Abs_central_mom_asympt_garch}. To prove it, we need the following Lemma, which extends Lemma 2.1 in \cite{Segers14} (case $v=1$) to any moment $v \in \N$, and the iid case presented in Lemma~8 in \cite{Brautigam19_iid}.

\begin{lemma} \label{lemma:segers_modif_garch}
Consider a stationary and ergodic time-series $(X_n, n \geq 1)$ with parent rv $X$ which has `short-memory', i.e. $\sum_{i=0}^{\infty} \lvert \Cov(X_0, X_i) \rvert < \infty$. 
Then, for $v=1$ or $2$, given that the 2nd moment of $X$ exists, or, for any integer $v > 2$, given that the $v$-th moment of $X$ exists, letting $n \rightarrow \infty$, it holds that
\begin{equation} \label{eq:segers_lemma_eq_garch}
\frac{1}{n} \sum_{i=1}^n (X_i - \mu)^{v} \left( \lvert X_i - \bar{X}_n \rvert - \lvert X_i - \mu \rvert \right) = (\bar{X}_n - \mu) \times \E[(X - \mu )^v \sgn(\mu-X)] + o_P(1/\sqrt{n}).
\end{equation}
\end{lemma}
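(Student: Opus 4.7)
With the substitution $Y_i = X_i - \mu$ and $\bar Y_n = \bar X_n - \mu$, the left-hand side of~\eqref{eq:segers_lemma_eq_garch} becomes $\frac{1}{n}\sum_{i=1}^n Y_i^v \bigl(|Y_i - \bar Y_n| - |Y_i|\bigr)$. The plan is to exploit the elementary decomposition
\[
|a-b| - |a| \;=\; -b\,\sgn(a) + R(a,b),
\]
where a straightforward case analysis on the signs of $a$ and $a-b$ shows that $R(a,b) = 0$ whenever these signs agree, and that in every case one has the pointwise bound $|R(a,b)| \le 2|b|\,\1(|a|\le |b|)$. Applied with $a = Y_i$ and $b = \bar Y_n$, this splits the sum into a leading term $-\bar Y_n \cdot \frac{1}{n} \sum_{i=1}^n Y_i^v\sgn(Y_i)$ and a remainder $\frac{1}{n}\sum_{i=1}^n Y_i^v R(Y_i, \bar Y_n)$.

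The leading term is going to produce the announced right-hand side. The short-memory hypothesis together with the existence of the second moment yields $\Var(\bar Y_n) = O(1/n)$, hence $\bar Y_n = O_P(n^{-1/2})$ by Chebyshev. The stated moment conditions guarantee $\E[|Y|^v] < \infty$ (the finite second moment covers $v\in\{1,2\}$, the $v$-th moment is assumed explicitly for $v>2$), so Birkhoff's ergodic theorem gives $\frac{1}{n}\sum_{i=1}^n Y_i^v \sgn(Y_i) \to \E[Y^v \sgn(Y)]$ almost surely. Combining these two facts,
\[
-\bar Y_n \cdot \frac{1}{n}\sum_{i=1}^n Y_i^v \sgn(Y_i) \;=\; -\bar Y_n \, \E[Y^v\sgn(Y)] \;+\; O_P(n^{-1/2})\cdot o_P(1),
\]
and using $-\E[Y^v \sgn(Y)] = \E[(X-\mu)^v \sgn(\mu - X)]$ recovers exactly the leading term of~\eqref{eq:segers_lemma_eq_garch}, the cross error being $o_P(n^{-1/2})$.

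For the remainder, combining the pointwise bound on $R$ with the trivial inequality $|Y_i|^v\,\1(|Y_i|\le|\bar Y_n|) \le |\bar Y_n|^v$ yields $|Y_i^v R(Y_i,\bar Y_n)| \le 2|\bar Y_n|^{v+1}$ for every $i$, so
\[
\left| \frac{1}{n}\sum_{i=1}^n Y_i^v R(Y_i,\bar Y_n) \right| \;\le\; 2\,|\bar Y_n|^{v+1} \;=\; O_P\bigl(n^{-(v+1)/2}\bigr) \;=\; o_P(n^{-1/2})
\]
for every $v\ge 1$. The only mildly delicate step is establishing the sharp bound $|R(a,b)|\le 2|b|\,\1(|a|\le |b|)$ by grinding through the four sign cases for $(a, a-b)$; once this is in hand, the remainder is actually of order $n^{-1}$ or smaller (well below $n^{-1/2}$) and the rest of the argument is reduced to Chebyshev's inequality and the ergodic theorem, neither of which requires anything beyond the moment hypotheses already stated.
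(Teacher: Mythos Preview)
Your proof is correct and follows essentially the same approach as the paper's: the paper refers to the iid argument in \cite{Brautigam19_iid} and notes that only two adaptations are needed---replacing the strong law by the ergodic theorem for $\frac{1}{n}\sum (X_i-\mu)^v\sgn(\mu-X_i)$, and using stationarity, ergodicity and short-memory to obtain $\sqrt{n}\,|\bar X_n-\mu|^{v+1}\overset{P}{\to}0$---and these are exactly the two ingredients you invoke. Your write-up simply makes the underlying decomposition $|a-b|-|a|=-b\,\sgn(a)+R(a,b)$ and the remainder bound $|R(a,b)|\le 2|b|\,\1(|a|\le|b|)$ explicit.
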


\begin{proof}
The proof follows the lines of its equivalent in the iid case; see proof of Lemma~8 in \cite{Brautigam19_iid}.
The argumentation needs to be adapted only at the end in two points, using the stationarity, ergodicity and short-memory of the process.
Here, it follows by these three properties
%i.e. CLT for stationary ergodic seq (e.g. Billingsley68, Thm 23.1( + ergodicity which is "equivalent" to LLN
that $\sqrt{n} \lvert \bar{X}_n - \mu \rvert^{v+1} \underset{n \rightarrow \infty}{\overset{P} \rightarrow} 0$ holds for any integer $v \geq 1$. 
%by Slutsky
Further, as a last step, we use the ergodicity of the process, instead of the strong law of large numbers, to conclude that
%Birkhoff Erogic Theorem
%https://terrytao.wordpress.com/2008/06/18/the-strong-law-of-large-numbers/
\[ \frac{1}{n} \sum_{i=1}^n (X_i - \mu)^v \sgn(\mu-X_i) \underset{n \rightarrow \infty}{\overset{a.s.}\rightarrow} \E[(X - \mu)^v \sgn(\mu-X)]. \]
\end{proof}

Now we are ready to state the asymptotic relation between
the r-th absolute centred sample moment with known and unknown mean, respectively. 
This enables us to compute the asymptotics of $\hat{m}(X,n,r)$ (given that the necessary moments exist). As for Lemma~\ref{lemma:segers_modif_garch}, it is an extension to the stationary, ergodic and short-memory case of Proposition~9 in the iid case \cite{Brautigam19_iid}.
\begin{proposition} \label{prop:Abs_central_mom_asympt_garch}
Consider a stationary and ergodic time-series $(X_n, n \geq 1)$ with parent rv $X$ which has `short-memory', i.e. $\sum_{i=0}^{\infty} \lvert \Cov(X_0, X_i) \rvert < \infty$. 
Then, for any integer $r\geq 1$, given that the $r$-th moment of $X$ exists and $(C_0)$ at $\mu$ for $r=1$, it holds, as $n \rightarrow \infty$, that
\begin{equation} \label{eq:Abs_central_mom_asympt_garch}
\sqrt{n} \left(\frac{1}{n} \sum_{i=1}^n \lvert X_i - \bar{X}_n \rvert ^r \right) =  \sqrt{n} \left(\frac{1}{n} \sum_{i=1}^n \lvert X_i - \mu \rvert^r \right) - r \sqrt{n} (\bar{X}_n - \mu) \E[ (X - \mu)^{r-1} \sgn(X - \mu)^r] + o_P(1) .
%comment to myself: sgn^r is correct, as we only need the sign function for uneven $r$!
\end{equation}
\end{proposition}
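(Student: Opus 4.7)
The plan is to mimic the iid argument of Proposition~9 in~\cite{Brautigam19_iid}, replacing independence-based tools by (i) Birkhoff's pointwise ergodic theorem, giving $\frac1n\sum_{i=1}^n g(X_i) \to \E[g(X)]$ a.s.\ whenever $g(X)$ is integrable, and (ii) the short-memory assumption $\sum_i|\Cov(X_0,X_i)| < \infty$ (which implicitly forces $\E[X^2]<\infty$), so that $\Var(\sqrt n\,\bar{X}_n) = O(1)$ and hence $\sqrt n(\bar{X}_n-\mu) = O_P(1)$ by Chebyshev. The overall strategy is a first-order expansion of $|X_i-\bar{X}_n|^r$ around $|X_i-\mu|^r$, handled according to the parity of $r$.

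For $r\ge 2$ even, $|X_i-\bar{X}_n|^r = (X_i-\bar{X}_n)^r$ is polynomial in $(X_i-\mu)$ and $(\bar{X}_n-\mu)$, and I would expand binomially. The degree-$0$ term in $(\bar{X}_n-\mu)$ reproduces $\frac1n\sum(X_i-\mu)^r$; the degree-$1$ term $-r(\bar{X}_n-\mu)\frac1n\sum(X_i-\mu)^{r-1}$ matches the stated first-order correction, once one observes that $\sgn(X-\mu)^r=1$ for $r$ even and applies the ergodic theorem; the terms of degree $k\ge 2$ contribute $\sqrt n\cdot O_P(n^{-k/2})=o_P(1)$. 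For $r\ge 3$ odd I would write $|X_i-\bar{X}_n|^r=(X_i-\bar{X}_n)^{r-1}|X_i-\bar{X}_n|$, expand the polynomial factor $(X_i-\bar{X}_n)^{r-1}$ binomially, and in the resulting degree-$0$ and degree-$1$ pieces apply Lemma~\ref{lemma:segers_modif_garch} with $v=r-1$ and $v=r-2$ respectively to replace $|X_i-\bar{X}_n|$ by $|X_i-\mu|$ up to an $o_P(n^{-1/2})$ error of the form $-(\bar{X}_n-\mu)\E[(X-\mu)^v\sgn(X-\mu)]$. Regrouping the two first-order contributions gives exactly $-r(\bar{X}_n-\mu)\E[(X-\mu)^{r-1}\sgn(X-\mu)^r]$; the higher-order binomial terms remain $o_P(n^{-1/2})$ after multiplication by $\sqrt n$ by the same scaling.

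The case $r=1$ demands a separate argument, since $|\cdot|$ is not differentiable at $0$ and Lemma~\ref{lemma:segers_modif_garch} (which requires $v\ge 1$) does not apply. I would use the elementary identity
\[
|X_i-\bar{X}_n|-|X_i-\mu| = -(\bar{X}_n-\mu)\,\sgn(X_i-\mu) + R_{n,i},
\]
where $R_{n,i}$ vanishes unless $X_i$ lies strictly between $\mu$ and $\bar{X}_n$, in which case $|R_{n,i}|\le 2|\bar{X}_n-\mu|$. The leading piece $-\sqrt n(\bar{X}_n-\mu)\frac1n\sum\sgn(X_i-\mu)$ converges by ergodicity to the announced limit $-\sqrt n(\bar{X}_n-\mu)\E[\sgn(X-\mu)]$. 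For the remainder, $\bigl|\sum R_{n,i}\bigr|\le 2|\bar{X}_n-\mu|\cdot N_n$ with $N_n:=\#\{i\le n:X_i\in[\min(\mu,\bar{X}_n),\max(\mu,\bar{X}_n)]\}$; conditioning on the arbitrarily-high-probability event $\{|\bar{X}_n-\mu|\le c/\sqrt n\}$ and using $(C_0)$ at $\mu$ to bound $\E[N_n]\le n\,\P(|X-\mu|\le c/\sqrt n)=o(n)$ yields $N_n/n=o_P(1)$, so the scaled remainder $\sqrt n\cdot(2|\bar{X}_n-\mu|N_n/n)=2\sqrt n(\bar{X}_n-\mu)\cdot(N_n/n)$ is $o_P(1)$.

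The main obstacle is precisely the case $r=1$: the non-smoothness of $|\cdot|$ blocks both a Taylor expansion and a direct use of Lemma~\ref{lemma:segers_modif_garch}, and in the dependent setting one cannot invoke iid empirical-process estimates to control the observations falling inside the shrinking interval around $\mu$. The continuity hypothesis $(C_0)$ at $\mu$ enters exactly at this point, jointly with stationarity and a Markov estimate in place of independence, to force the empirical mass of that interval to be negligible.
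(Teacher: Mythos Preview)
Your proposal is correct and follows essentially the same three-case strategy as the paper's proof, which likewise extends Proposition~9 of \cite{Brautigam19_iid} by substituting Birkhoff's ergodic theorem for the strong law of large numbers, the short-memory bound $\sqrt n(\bar X_n-\mu)=O_P(1)$ for its iid analogue, and Lemma~\ref{lemma:segers_modif_garch} for its iid counterpart. The only difference is that for $r=1$ the paper simply invokes Lemma~2.1 of \cite{Segers14}, which already covers the stationary ergodic case, whereas you supply a direct argument---effectively a proof of that lemma.
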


\begin{proof}
Analogously to the proof of Lemma~\ref{lemma:segers_modif_garch}, the proof can be extended from the proof of Proposition~9 in the iid case in \cite{Brautigam19_iid}.
We comment on the differences compared to the iid case for the three different cases of $r$: 

\textit{Even integers $r$ - }
Recall that for the corresponding result in the iid case (see the proof of Proposition~9 in \cite{Brautigam19_iid}) we refered to the example~5.2.7 in \cite{Lehmann99}. 
Therein they only consider the iid case but in this case, the argumentation still holds as $\sqrt{n} (\bar{X}_n -\mu)^v \overset{P}\rightarrow 0$, for $v\geq 2$, holds for an ergodic, stationary, short-memory process too.

\textit{Case $r=1$ - }
The result cited in the iid case %, %\eqref{eq:theta_representation2}, 
holds for ergodic, stationary time-series too, see Lemma 2.1 in \cite{Segers14}.

\textit{Odd integer $r>1$ - }
We point out the three differences to the corresponding proof in the iid case.
First, as remarked above for even integers $r$,$\sqrt{n} (\bar{X}_n -\mu)^v \overset{P}\rightarrow 0$, for $v\geq 2$, follows from the stationarity, ergodicity and short-memory of the process.
Second, we use the ergodicity instead of the law of large numbers. Third, we use Lemma~\ref{lemma:segers_modif_garch} instead of its counterpart in the iid case, Lemma~8 in \cite{Brautigam19_iid}.
\end{proof}

Finally, we state a multivariate FCLT which we will use to prove the theorem, and which is from \cite{Aue09} (adapted to our needs):

\begin{lemma}[Theorem A.1 in \cite{Aue09}] \label{lemma:Aue_FCLT}
Consider a d-dimensional random process $(u_j, j\in \Z)$, 
%with components $u_j := \begin{pmatrix} u_{j,1} \\ ... \\ u_{j,d} \end{pmatrix}$, 
which is centered and has finite variance, i.e.
\begin{equation} \label{eq:repr1}
\E[u_j] =0, \quad \| u_j \|_2^2 < \infty ~\forall j \in \Z,
\end{equation}
and has a causal (possibly non-linear) representation in terms of an iid process, i.e. 
\begin{equation} \label{eq:Aue09_representation_of_vector}
u_j = f( \epsilon_j, \epsilon_{j-1},...),
\end{equation}
where $f: \mathbb{R}^{\infty} \rightarrow \mathbb{R}^d$ is a measurable function and $({\epsilon}_j, j \in \mathbb{Z})$ is a sequence of real valued iid rv's with mean $0$ and variance $1$.

Suppose further, there exists a $\Delta$-dependent approximation of ${u}_j$, i.e. a sequence of d-dimensional random vectors $\left({u}_j^{(\Delta)}, j \in \Z\right)$ such that, for any $\Delta \geq 1$, we have
\begin{align} 
& u_j^{(\Delta)} ={f}^{(\Delta)}({\epsilon}_{j-\Delta},...,{\epsilon}_{j},...,{\epsilon}_{j+\Delta}) \label{eq:repr3}
\\ &\text{and}~\sum_{\Delta \geq 1} \| u_0 - u_0^{(\Delta)} \|_2 < \infty, \label{eq:repr2}
\end{align}
where ${f}^{(\Delta)}: \mathbb{R}^{2\Delta+1} \rightarrow \mathbb{R}^d$ is a measurable function. 

Then, the series $\Gamma = \sum_{j \in Z} \Cov( u_0, u_j)$ converges (coordinatewise) absolutely and an FCLT holds for $U_n := \frac{1}{n} \sum_{j=1}^n u_j$
\[ \sqrt{n}t U_{[nt]} \overset{D_d[0,1]}{\rightarrow} W_{\Gamma}(t),\]
where the convergence takes place in the d-dimensional Skorohod space $D_d[0,1]$ and  $(W_{\Gamma}(t), t \in [0,1])$ is a d-dimensional Brownian motion with covariance matrix $\Gamma$, i.e. it has mean 0 and 
\\ $\Cov(W_{\Gamma}(s), W_{\Gamma}(t)) = \min(s,t) \Gamma$.
\end{lemma}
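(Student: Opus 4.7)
The plan is to follow exactly the route the authors allude to: reduce the multivariate functional convergence to (i) absolute convergence of the covariance series $\Gamma$, (ii) convergence of the finite-dimensional distributions via the Cram\'er--Wold device applied to a univariate FCLT, and (iii) tightness of each coordinate in $D[0,1]$. The $L_2$-summability hypothesis \eqref{eq:repr2} is the single workhorse: it allows one to replace the process $(u_j)$ by a $(2\Delta{+}1)$-dependent process with controllable error.

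\textbf{Step 1: Absolute convergence of $\Gamma$.} For each pair of coordinates, I would bound $|\Cov(u_0, u_j)|$ by choosing $\Delta_j := \lfloor |j|/2 \rfloor$ and decomposing
\[ \Cov(u_0, u_j) = \Cov(u_0 - u_0^{(\Delta_j)}, u_j) + \Cov(u_0^{(\Delta_j)}, u_j - u_j^{(\Delta_j)}) + \Cov(u_0^{(\Delta_j)}, u_j^{(\Delta_j)}). \]
The third term vanishes for $|j| > 2\Delta_j$ since $u_0^{(\Delta_j)}$ and $u_j^{(\Delta_j)}$ are functions of disjoint blocks of the iid sequence $(\epsilon_i)$. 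Cauchy--Schwarz and stationarity bound the first two by $\|u_0\|_2 \cdot \|u_0 - u_0^{(\Delta_j)}\|_2$. Summing over $j$ and rearranging, absolute convergence follows from $\sum_{\Delta\ge 1} \|u_0 - u_0^{(\Delta)}\|_2 < \infty$.

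\textbf{Step 2: Univariate FCLT and Cram\'er--Wold.} For fixed $\lambda \in \R^d$, set $v_j := \lambda^T u_j$ and $v_j^{(\Delta)} := \lambda^T u_j^{(\Delta)}$. The process $(v_j^{(\Delta)})$ is stationary and $(2\Delta{+}1)$-dependent, so the classical FCLT for $m$-dependent sequences (e.g.\ Billingsley \cite{Billingsley68}) gives $n^{-1/2}\sum_{j=1}^{[nt]} v_j^{(\Delta)} \overset{D[0,1]}{\to} \sigma_\Delta W(t)$, with $\sigma_\Delta^2 = \sum_{|j|\le 2\Delta} \Cov(v_0^{(\Delta)}, v_j^{(\Delta)})$. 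By Step~1, $\sigma_\Delta^2 \to \lambda^T \Gamma \lambda$ as $\Delta \to \infty$. The approximation error is controlled by a maximal inequality for partial sums of stationary sequences, yielding
\[ \E\sup_{t \in [0,1]} \Big| \tfrac{1}{\sqrt{n}} \sum_{j=1}^{[nt]} (v_j - v_j^{(\Delta)}) \Big|^2 \;\leq\; C \Big( \sum_{k \geq \Delta} \|v_0 - v_0^{(k)}\|_2 \Big)^2 \xrightarrow[\Delta\to\infty]{} 0, \]
uniformly in $n$. A standard diagonal argument then produces the univariate FCLT for $(v_j)$, and Cram\'er--Wold upgrades this to convergence of the finite-dimensional distributions of $\sqrt{n}\, t\, U_{[nt]}$ in $\R^d$.

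\textbf{Step 3: Tightness in $D_d[0,1]$.} Tightness of the $\R^d$-valued partial-sum process reduces to tightness of each coordinate in $D[0,1]$ (since the limit is continuous), and the latter is established from the moment bound $\E | U_{n,i}([nt]) - U_{n,i}([ns]) |^2 \le C (t-s)$ that follows from the covariance estimates of Step~1, via standard Billingsley-type criteria (Theorem 13.5 of \cite{Billingsley68}). The main obstacle, in my view, is Step~2: the passage from the $L_2$ summability assumption \eqref{eq:repr2} to a uniform maximal inequality on the approximation error requires a careful use of a Doob- or M\'oricz-type inequality for stationary partial sums, and it is this single ingredient that does the heavy lifting for both the finite-dimensional convergence and the tightness argument.
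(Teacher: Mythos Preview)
The paper does not give its own proof of this lemma; it is quoted from \cite{Aue09} and accompanied only by a remark that the argument proceeds via Cram\'er--Wold together with univariate tightness of the coordinate processes, which is precisely the route you sketch. Your three-step outline (summability of the covariance series via the $\Delta$-dependent approximation, a univariate FCLT for the $m$-dependent approximants combined with an $L_2$ bound on the approximation error, and coordinatewise tightness) is the standard implementation of that strategy and matches what the paper indicates.
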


\begin{remark}
This multivariate FCLT extends the univariate counterpart from, e.g., Billingsley in~\cite{Billingsley68}. 
For that, they prove in \cite{Aue09} that the univariate FCLT's are sufficient to establish the multivariate version, using Cram{\'e}r-Wold and the univariate tightness of the corresponding processes.

The cited version in Lemma~\ref{lemma:Aue_FCLT} differs in two small details from the original Theorem A.1 in \cite{Aue09}. First, it is less general as they assume $f:\R^{d' \times \infty} \rightarrow \R^d$ (and consequently $f^{\Delta}:\R^{d' \times \infty} \rightarrow \R^d$) as well as $(\epsilon_j, j \in \Z)$ to be an iid sequence of random vectors with values in $\R^{d'}$. In our case $d'=1$ is sufficient.

Further, note that we adapted \eqref{eq:repr3} from originally being $u_j^{(\Delta)} ={f}^{(\Delta)}({\epsilon}_{j},...,{\epsilon}_{j-\Delta})$. Indeed, it is straightforward to show that the proof of \cite{Aue09} still holds with this modification.  % and thus their theorem can be applied in this case too.
\end{remark}

\begin{proof}{\bf of Theorem~\ref{thm:augm_GARCH_pq_asympt_qn}.}
The proof consists of four steps. We first show that the process $(X_t)$ fulfils the conditions required for having a Bahadur representation of the sample quantile, second, that a similar representation holds for the r-th absolute centred sample moment, third, that the conditions for an FCLT (Lemma~\ref{lemma:Aue_FCLT}) are fulfilled, which we then use in the fourth step to conclude the multivariate FCLT. 

{\sf Step 1: Bahadur representation of the sample quantile - conditions.}\\
The Bahadur representation of the sample quantile for a GARCH($p$, $q$) process is well known and can be obtained as a special case for Bahadur representations for processes with a certain dependence structure, see e.g. \cite{Kulik06} and references therein. Here we want to establish the Bahadur representation for sample quantiles from augmented GARCH($p$, $q$) processes of the $(Lee)$ family.
We will use the Bahadur representation for general NED processes (see Theorem~1 in \cite{Wendler11}). For the ease of comparison, we adapt some of the notation of Theorem~1 in \cite{Wendler11}. It holds under some conditions that we need to verify:
%\vspace*{-0.5cm}
\begin{itemize}
\item[-] Choosing the bivariate function $g(x,t) := \1_{(x \leq t)}$, the non-negativity, boundedness, measurability, and non-decreasingness in the second variable, are straightforward. The function $g$ also satisfies the variation condition uniformly in some neighbourhood of $q_X(p)$ if it is Lipschitz-continuous (see Example 1.5 in \cite{Wendler11}). But the latter follows from condition $(C_2^{~'})$. 
%\vspace*{-0.2cm} 
\item[-] The differentiability of $\E[g(X,t)]= F_X(t)$ and positivity of its derivative at $t=q_X(p)$ are given by condition $(P)$ at $q_X(p)$. 
%\vspace*{-0.2cm} 
\item[-] Equation (12) in \cite{Wendler11} is fulfilled as, by our assumption $(C_2^{~'})$, $F_X$ is twice differentiable in $q_X(p)$ (see Remark 2, \cite{Wendler11}).
%\vspace*{-0.2cm}
\item[-] The stationarity of the process follows from assumption $(P_{\max(1,r/\delta)})$ or $(L_r)$, respectively, and Lemma 1 of \cite{Lee14}.
%\vspace*{-0.2cm} 
\item[-] Lastly, let us verify that the process $(X_t)$ is $L_1$-NED with polynomial rate.
Denoting, for $s \leq t$, the sigma-algebra $\mathcal{F}_{s}^t = \sigma(\epsilon_s, ...,\epsilon_t)$, we can write for any integer $\Delta \geq 1$ 
\[ \| X_t - \E[X_t \vert  \mathcal{F}_{t-\Delta}^{t+\Delta}] \|_2^2 =  \| \sigma_t - \E[\sigma_t \vert \mathcal{F}_{t-\Delta}^{t+\Delta}] \|_2^2 ~~\sqrt{\E[\epsilon_t^2]}. \]
But $\E[ \epsilon_t^2]< \infty$ since $(M_r)$ holds. Notice that the property of being geometrically $L_2$-NED, $\| \sigma_t -\E[\sigma_t \vert \mathcal{F}_{t-\Delta}^{t+\Delta}] \|_2 = O(e^{-\kappa \Delta})$ for some $\kappa>0$, implies $L_1$-NED with polynomial rate, as
\begin{equation} \label{eq:NED-condition}
\| \sigma_t -  \E[\sigma_t \vert \mathcal{F}_{t-\Delta}^{t+\Delta}] \|_1 \leq \| \sigma_t - \E[\sigma_t \vert \mathcal{F}_{t-\Delta}^{t+\Delta}] \|_2 = O(e^{-\kappa \Delta}) = O(\Delta^{-(\beta+3)}),
\end{equation}
for some $\beta >3$. So it suffices showing that $\sigma_t$ is geometrically $L_2$-NED. For the polynomial GARCH, it follows from Corrollary 1 in \cite{Lee14}, which can be applied as $(A)$ and $(P_1)$ hold. 
For the exponential GARCH case, it follows from Corrollary 3 in \cite{Lee14} as $(A)$ and $(L_r)$ hold. 
\end{itemize}
Thus, we can use Theorem 1 of \cite{Wendler11} and write, as $n \rightarrow \infty$,
\begin{equation} \label{eq:Bahadur_qn_Wendler}
q_n(p)- q_X(p) + \frac{F(q_X(p))-F_n(q_X(p))}{f_X(q_X(p))} = o_P(1/\sqrt{n}).
\end{equation}
Note that we do not use the exact remainder bound as in \cite{Wendler11} as for our purposes $o_P(1/\sqrt{n})$ is enough.
%for some $\gamma \geq 1/5$. %THis gamma follows from the Wendler condition

{\sf Step 2: Representation of the r-th absolute centred sample moment -conditions.}\\
The representation being given in Proposition~\ref{prop:Abs_central_mom_asympt_garch} under some conditions, we only need to check that we fulfil them.
%\vspace*{-0.6cm}
\begin{itemize}
\item[-] The stationarity of the process is satisfied under $(P_{max(1,r/\delta)})$ or $(L_r)$ as observed in Step 1.
%\vspace*{-0.2cm}
\item[-] For the moment condition, short-memory property and ergodicity, we simply verify that the conditions for a CLT of $X_t^r$ (or $\lvert X_t \rvert^r$) are fulfilled,
%ergodicity follows from CLT: %see e.g. https://math.stackexchange.com/questions/1713995/does-the-central-limit-theorem-imply-the-strong-law-of-large-numbers
distinguishing between the polynomial and exponential case.
Conditions $(M_r), (A), (P_{max(1,r/\delta)})$ in the polynomial case, and $(M_r), (A), (L_{r})$ in the exponential case respectively, imply the CLT, using Corollary 2 and 3 in \cite{Lee14}, respectively.
\end{itemize}

{\sf Step 3: Conditions for applying the FCLT}\\
In our case we want to apply Lemma~\ref{lemma:Aue_FCLT} in a three-dimensional version. This simplifies the computation, and by applying the continuous mapping theorem we will finally get back a two-dimensional representation. This will be made explicit in Step~4.

Therefore, let us define, anticipating its use in Step 4 for the FCLT of $U_n (X):=\frac{1}{n} \sum_{j=1}^n u_j$,
\[ u_j = \begin{pmatrix} X_j \\ \lvert X_j  \rvert^r - m(X,r) \\ \frac{p - \1_{(X_j \leq q_X(p))}}{f_X(q_X(p))}  \end{pmatrix}. \]
We need to verify that $u_j$ fulfils~\eqref{eq:repr1}: $\E[u_j] =0$ holds by construction.
$\E[ \lvert X_j\rvert^{2r}]< \infty$ is guaranteed since $\lvert X_t \rvert^{r}$ satisfies a CLT (see Step 2), thus also $\E[u_j^2] < \infty$.
Finally $X_j = {f}({\epsilon}_j,{\epsilon}_{j-1},...)$ follows from Lemma 1 in \cite{Lee14}, as we assume $(A)$.
Hence, this latter relation also holds for functionals of $X_j$, so for $u_j$, i.e. \eqref{eq:Aue09_representation_of_vector} is fulfilled.

Then, we define a $\Delta$-dependent approximation $u_0^{(\Delta)}$ satisfying \eqref{eq:repr3} and~\eqref{eq:repr2}.
Denote, for the ease of notation, $X_{0\Delta} := \E[X_0 \vert \mathcal{F}_{-\Delta}^{+\Delta}]$, and set $u_0^{(\Delta)} = \begin{pmatrix}
X_{0\Delta} \\ \E[\lvert X_0  \rvert^r \vert \mathcal{F}_{-\Delta}^{+\Delta}] - m(X,r) \\ \frac{p - \1_{(X_{0\Delta} \leq q_X(p))}}{f_X(q_X(p))} 
\end{pmatrix}$ with $\mathcal{F}_s^t = \sigma({\epsilon}_s,...,{\epsilon}_t)$ for $s\leq t$.
Thus, \eqref{eq:repr3} is fulfilled by construction. Let us verify~\eqref{eq:repr2}. We can write
\begin{align*}
& \sum_{\Delta \geq 1} \| u_0 - u_0^{(\Delta)} \|_2  
\\ &= \sum_{\Delta \geq 1} \E \left[ \left(X_0- X_{0\Delta} \right)^2 + \left( \lvert X_0\rvert^r - \E[\lvert X_0  \rvert^r \vert \mathcal{F}_{-\Delta}^{+\Delta}] \right)^2 + \frac{1}{f_X^2(q_X(p))} \left(-\1_{(X_0 \leq q_X(p))} + \1_{(X_{0\Delta} \leq q_X(p))}   \right)^2 \right]^{1/2} 
\\ & \leq   \sum_{\Delta \geq 1} \left( \| X_0- X_{0\Delta} \|_2 + \| \lvert X_0\rvert^r - \E[\lvert X_0  \rvert^r \vert \mathcal{F}_{-\Delta}^{+\Delta}] \|_2 + \frac{1}{f_X(q_X(p))} \left\| -\1_{(X_0 \leq q_X(p))} +  \1_{(X_{0\Delta} \leq q_X(p))}  \right\|_2 \right). \numberthis \label{eq:ineq1}
\end{align*}
Obviously, a sufficient condition for \eqref{eq:ineq1} is the finiteness of its summands. If it holds that each summand is geometrically $L_2$-NED, then its sum will be finite.
E.g. assuming that $X_0$ is geometrically $L_2$-NED, i.e. $ \| X_{0}  - X_{0\Delta} \|_2 =  O(e^{- \kappa \Delta}) $ for some $\kappa >0$, 
it follows that $\sum_{\Delta \geq 1} \| X_{0}  - X_{0\Delta} \|_2  < \infty$.

The condition of geometric $L_2$-NED of $X_0$ and $\lvert X_0^r \rvert$ is satisfied, on the one hand in the polynomial case under $(M_r), (A)$ and $(P_{\max(1,r/\delta)})$ via Corollary 2 in \cite{Lee14}, on the other hand in the exponential case under $(M_r), (A)$ and $(L_r)$ via Corollary 3 in \cite{Lee14}. 
Thus, as $X$ is geometric $L_2$-NED this follows also for its bounded functional $\1_{(X_0 \leq q_X(p))}$ using Lemma 3.5 in \cite{Wendler11} as we showed already in Step 1 that this functional satisfies the variation condition. % (note, as pointed out in \cite{Wendler11_diss} 
%see p60 of his dissertation
The result in the case of an indicator function goes back to \cite{Philipp77}).
%therein Lemma 3.2.1 of Philipp

{\sf Step 4: Multivariate FCLT}\\
Having checked the conditions for the FCLT of Lemma~\ref{lemma:Aue_FCLT} in Step 3, we can apply a trivariate FCLT for $u_j$:

Using the Bahadur representation \eqref{eq:Bahadur_qn_Wendler} of the sample quantile (ignoring the rest term for the moment), we can state:
\begin{equation} \label{eq:asympt_MAD_trivariate_normal}
 \sqrt{n} \frac{1}{n} \sum_{j=1}^{[nt]} u_j =   \sqrt{n}~t \begin{pmatrix} \bar{X}_{[nt]}  \\ \frac{1}{[nt]} \sum_{j=1}^{[nt]} \lvert X_j \rvert^r - m(X,r) \\ \frac{p-F_{[nt]}(q_X(p))}{f_X(q_X(p))} \end{pmatrix} \overset{D_3[0,1]}{\rightarrow}  \textbf{W}_{\tilde{\Gamma}^{(r)}} (t) \quad \text{~as~} n \rightarrow \infty,
\end{equation}
where $\textbf{W}_{\tilde{\Gamma}^{(r)}}(t), t \in [0,1]$ is the 3-dimensional Brownian motion with covariance matrix ${\tilde{\Gamma}^{(r)}} \in \R^{3\times 3}$, i.e. the components ${\tilde{\Gamma}^{(r)}}_{ij}, 1\leq i,j \leq 3$, satisfy the same dependence structure as for the random vector $(U,V,W)^T$ described in $(D)$, with all series being absolutely convergent. %This follows from the fact that the multivariate CLT will be applied (see e.g. p4072 of Aue 09)
By the multivariate Slutsky theorem, we can add $\begin{pmatrix} 0 \\ 0 \\ R_{[nt],p} \end{pmatrix}$ to the asymptotics in \eqref{eq:asympt_MAD_trivariate_normal} without changing the resulting distribution (as $\sqrt{n} R_{[nt],p} \overset{p}\rightarrow 0$). Hence, as $n \rightarrow \infty$,
\begin{equation} \label{eq:conv_semifinal}
\hspace*{-3ex} \sqrt{n}~t \begin{pmatrix} \bar{X}_{[nt]}  - \mu \\ \frac{1}{[nt]} \sum_{j=1}^{[nt]} \lvert X_j \rvert^r - m(X,r) \\ \frac{p-F_{[nt]}(q_X(p))}{f_X(q_X(p))} \end{pmatrix} +  \sqrt{n}~t \begin{pmatrix} 0 \\ 0 \\ R_{[nt],p} \end{pmatrix} =  \sqrt{n}~t \begin{pmatrix} \bar{X}_{[nt]}  - \mu \\ \frac{1}{[nt]} \sum_{j=1}^{[nt]} \lvert X_j \rvert^r - m(X,r) \\ q_{[nt]} (p) - q_X(p) \end{pmatrix}  \overset{D_3[0,1]}{\rightarrow}  \textbf{W}_{\tilde{\Gamma}^{(r)}} (t).
\end{equation} 
Recalling the representation of $\hat{m}(X,n,r)$ in Proposition~\ref{prop:Abs_central_mom_asympt_garch}, we apply to  \eqref{eq:conv_semifinal} the multivariate continuous mapping theorem using the function $f(x,y,z) \mapsto (ax+y, z)$ with $a= -r \E[(X-\mu)^{r-1} \sgn(X-\mu)^r]$. Further, by Slutsky's theorem once again, we can add to $ax+y$ a rest of $o_P(1/\sqrt{n})$ without changing the limiting distribution, to obtain, as $n \rightarrow \infty$,
\begin{align*} 
\sqrt{n}~t &\begin{pmatrix} a (\bar{X}_{[nt]}  - \mu) + \frac{1}{[nt]} \sum_{j=1}^{[nt]} \lvert X_j \rvert^r - m(X,r) + o_P(1/\sqrt{n}) \\ q_{[nt]}(p) - q_X(p) \end{pmatrix} 
\\ &=  \sqrt{n}~t \begin{pmatrix} \hat{m}(X,[nt],r) - m(X,r) \\ q_{[nt]}(p) - q_X(p) \end{pmatrix} \overset{D_2[0,1]}{\rightarrow}  \textbf{W}_{\Gamma^{(r)}} (t), \numberthis \label{eq:asymptot1_MAD}
\end{align*} 
where $\Gamma^{(r)}$ follows from the specifications of $\tilde{\Gamma}^{(r)}$ above and the continuous mapping theorem.
\end{proof}

\small
\bibliography{garch-dep}

\begin{thebibliography}{10}

\bibitem{Andrews88}
{\sc Andrews, D.}
\newblock Laws of large numbers for dependent non-identically distributed
  random variables.
\newblock {\em Econometric Theory 4}, 3 (1988), 458--467.

\bibitem{Aue06}
{\sc Aue, A., Berkes, I., and Horv{\'a}th, L.}
\newblock Strong approximation for the sums of squares of augmented {GARCH}
  sequences.
\newblock {\em Bernoulli 12}, 4 (2006), 583--608.

\bibitem{Aue09}
{\sc Aue, A., H{\"o}rmann, S., Horv{\'a}th, L., and Reimherr, M.}
\newblock Break detection in the covariance structure of multivariate time
  series models.
\newblock {\em The Annals of Statistics 37}, 6B (2009), 4046--4087.

\bibitem{Berkes08}
{\sc Berkes, I., H{\"o}rmann, S., and Horv{\'a}th, L.}
\newblock The functional central limit theorem for a family of {GARCH}
  observations with applications.
\newblock {\em Statistics \& Probability Letters 78}, 16 (2008), 2725--2730.

\bibitem{Billingsley68}
{\sc Billingsley, P.}
\newblock {\em Convergence of probability measures}, 1st~ed.
\newblock John Wiley \& Sons, 1968.

\bibitem{Bollerslev86}
{\sc Bollerslev, T.}
\newblock Generalized autoregressive conditional heteroskedasticity.
\newblock {\em Journal of Econometrics 31}, 3 (1986), 307--327.

\bibitem{Bollerslev08}
{\sc Bollerslev, T.}
\newblock Glossary to {ARCH} ({GARCH}).
\newblock {\em CREATES Research Paper 49\/} (2008).

\bibitem{Brautigam19}
{\sc Br{\"a}utigam, M., Dacorogna, M., and Kratz, M.}
\newblock Pro-cyclicality of traditional risk measurements: Quantifying and
  highlighting factors at its source.
\newblock {\em arXiv:1903.03969\/} (2019).

\bibitem{Brautigam19_iid}
{\sc Br{\"a}utigam, M., and Kratz, M.}
\newblock On the dependence between functions of quantile and dispersion
  estimators.
\newblock {\em arXiv:1904.11871\/} (2019).

\bibitem{Carrasco02}
{\sc Carrasco, M., and Chen, X.}
\newblock Mixing and moment properties of various {GARCH} and stochastic
  volatility models.
\newblock {\em Econometric Theory 18}, 1 (2002), 17--39.

\bibitem{Ding93}
{\sc Ding, Z., Granger, C., and Engle, R.}
\newblock A long memory property of stock market returns and a new model.
\newblock {\em Journal of Empirical Finance 1}, 1 (1993), 83--106.

\bibitem{Duan97}
{\sc Duan, J.}
\newblock Augmented {GARCH} (p, q) process and its diffusion limit.
\newblock {\em Journal of Econometrics 79}, 1 (1997), 97--127.

\bibitem{Engle82}
{\sc Engle, R.}
\newblock Autoregressive conditional heteroscedasticity with estimates of the
  variance of united kingdom inflation.
\newblock {\em Econometrica: Journal of the Econometric Society 50}, 4 (1982),
  987--1007.

\bibitem{Engle93}
{\sc Engle, R., and Ng, V.}
\newblock Measuring and testing the impact of news on volatility.
\newblock {\em The Journal of Finance 48}, 5 (1993), 1749--1778.

\bibitem{Geweke86}
{\sc Geweke, J.}
\newblock Modeling the persistence of conditional variances: a comment.
\newblock {\em Econometric Reviews 5\/} (1986), 57--61.

\bibitem{Glosten93}
{\sc Glosten, L., Jagannathan, R., and Runkle, D.}
\newblock On the relation between the expected value and the volatility of the
  nominal excess return on stocks.
\newblock {\em The Journal of Finance 48}, 5 (1993), 1779--1801.

\bibitem{Higgins92}
{\sc Higgins, M., and Bera, A.}
\newblock A class of nonlinear {ARCH} models.
\newblock {\em International Economic Review 33}, 1 (1992), 137--158.

\bibitem{Hormann08}
{\sc H{\"o}rmann, S.}
\newblock Augmented garch sequences: Dependence structure and asymptotics.
\newblock {\em Bernoulli 14}, 2 (2008), 543--561.

\bibitem{Kulik06}
{\sc Kulik, R.}
\newblock Optimal rates in the bahadur-kiefer representation for {GARCH}
  sequences.
\newblock {\em arXiv preprint math/0605283\/} (2006).

\bibitem{Lee14}
{\sc Lee, O.}
\newblock Functional central limit theorems for augmented {GARCH} (p, q) and
  {FIGARCH} processes.
\newblock {\em Journal of the Korean Statistical Society 43}, 3 (2014),
  393--401.

\bibitem{Lehmann99}
{\sc Lehmann, E.}
\newblock {\em Elements of Large-Sample Theory}.
\newblock Springer Science \& Business Media, 1999.

\bibitem{Mikosch00}
{\sc Mikosch, T.}
\newblock Limit theory for the sample autocorrelations and extremes of a
  {GARCH} (1,1) process.
\newblock {\em The Annals of Statistics 28}, 5 (2000), 1427--1451.

\bibitem{Milhoj87}
{\sc Milh{\o}j, A.}
\newblock A multiplicative parameterization of {ARCH} models.
\newblock {\em Working Paper\/} (1987).

\bibitem{Nelson91}
{\sc Nelson, D.}
\newblock Conditional heteroskedasticity in asset returns: A new approach.
\newblock {\em Econometrica: Journal of the Econometric Society 59}, 2 (1991),
  347--370.

\bibitem{Pantula86}
{\sc Pantula, S.}
\newblock Modeling the persistence of conditional variances: a comment.
\newblock {\em Econometric Reviews 5\/} (1986), 79--97.

\bibitem{Philipp77}
{\sc Philipp, W.}
\newblock A functional law of the iterated logarithm for empirical distribution
  functions of weakly dependent random variables.
\newblock {\em The Annals of Probability\/} (1977), 319--350.

\bibitem{Schwert89}
{\sc Schwert, G.}
\newblock Why does stock market volatility change over time?
\newblock {\em The Journal of Finance 44}, 5 (1989), 1115--1153.

\bibitem{Segers14}
{\sc Segers, J.}
\newblock On the asymptotic distribution of the mean absolute deviation about
  the mean.
\newblock {\em arXiv:1406.4151\/} (2014).

\bibitem{Taylor86}
{\sc Taylor, S.}
\newblock {\em Modelling financial time series}.
\newblock Wiley, New York, 1986.

\bibitem{Wendler11}
{\sc Wendler, M.}
\newblock Bahadur representation for u-quantiles of dependent data.
\newblock {\em Journal of Multivariate Analysis 102}, 6 (2011), 1064--1079.

\bibitem{Zakoian94}
{\sc Zakoian, J.-M.}
\newblock Threshold heteroskedastic models.
\newblock {\em Journal of Economic Dynamics and Control 18}, 5 (1994),
  931--955.

\bibitem{Zumbach18}
{\sc Zumbach, G.}
\newblock Correlations of the realized volatilities with the centered
  volatility increment.
\newblock
  \url{http://www.finanscopics.com/figuresPage.php?figCode=corr_vol_r_VsDV0},
  2012.
\newblock [Online; accessed 25-April-2019].

\bibitem{Zumbach12}
{\sc Zumbach, G.}
\newblock {\em Discrete Time Series, Processes, and Applications in Finance}.
\newblock Springer Science \& Business Media, 2012.

\end{thebibliography}
\bibliographystyle{acm}

%\section*{APPENDIX}
%\vspace{-1ex}
%
\begin{appendices} 
\section{Different Augmented GARCH models} \label{appendix}
As mentioned in the paper, we give an overview over the acronyms, authors and relation to each other of the augmented GARCH processes used.
\\ The restrictions on the parameters, if not specified differently, are $\omega \geq0, \alpha_i \geq0, -1 \leq\gamma_i \leq 1, \beta_j \geq 0$ for $i=1,...,p$, $j=1,...,q$.
\begin{itemize}
\item APGARCH: Asymmetric power GARCH, introduced by Ding et al. in \cite{Ding93}. One of the most general polynomial GARCH models.
\item AGARCH: Asymmetric GARCH, defined also by Ding et al. in \cite{Ding93}, choosing $\delta =1$ in APGARCH.
\item GJR-GARCH: This process is named after its three authors Glosten, Jaganathan and Runkle and was defined by them in \cite{Glosten93}. For the parameters $\alpha_i^{\ast} , \gamma_i^{\ast}$ it holds that $\alpha_i^{\ast} = \alpha_i (1-\gamma_i)^2$ and $\gamma_i^{\ast}= 4 \alpha_i \gamma_i$.
\item GARCH: Choosing all $\gamma_i=0$ in the AGARCH model (or $\gamma_i^{\ast} = 0$ in the GJR-GARCH), gives back the well-known GARCH($p$, $q$) process by Bollerslev in \cite{Bollerslev86}.
\item ARCH: Introduced by Engle in \cite{Engle82}. We recover it by setting all $\gamma_i =\beta_j=0, \forall i,j$.
\item TGARCH: Choosing $\delta=1/2$ in the APGARCH model leads us the so called threshold GARCH (TGARCH) by Zakoian in \cite{Zakoian94}. For the parameters $\alpha_i^{+} , \alpha_i^{-}$ it holds that $\alpha_i^{+} = \alpha_i (1-\gamma_i), \alpha_i^{-}= \alpha_i (1+ \gamma_i)$.
\item TSGARCH: Choosing $\gamma_i=0$ in the TGARCH model we get, as a subcase, the TSGARCH model, named after its authors, i.e. Taylor, \cite{Taylor86}, and Schwert, \cite{Schwert89}. %see p18 
\item PGARCH: Another subfamily of the APGARCH processes is the Power-GARCH (PGARCH), also called sometimes NGARCH (i.e. non-linear GARCH) due to Higgins and Bera in \cite{Higgins92}.
\item VGARCH: The volatility GARCH (VGARCH) model by Engle and Ng in \cite{Engle93} is also a polynomial GARCH model but is not part of the APGARCH family.
\item NGARCH: This non-linear asymmetric model is due to Engle and Ng in \cite{Engle93},  %according to Aue 06
and sometimes also called NAGARCH. 
\item MGARCH: This model is called multiplicative or logarithmic GARCH and goes back to independent suggestions, in slightly different formulations, of Geweke in
\cite{Geweke86},  Pantula in \cite{Pantula86} and Milh{\o}j in \cite{Milhoj87}.
\item EGARCH: This model is called exponential GARCH, introduced by Nelson in \cite{Nelson91}.
\end{itemize}

Then we give a schematic overview of the nesting of the different models in Figure~\ref{fig:GARCH_nested}.

Lastly, we present in Table~\ref{tbl:cond_bivariate-conv-pq} how the conditions $(P_{\max{(1, r/\delta)}})$ or $(L_r)$ respectively translate for those augmented GARCH($p$,$q$) processes - this is the generalization of Table~\ref{tbl:cond_bivariate-conv-11}.
As, in contrast to Table~\ref{tbl:cond_bivariate-conv-11}, we do not gain any insight by considering the choices of $r=1$ or $r=2$, we only present the general case, $r \in \N$.

When $p \neq q$ we need to consider coefficients $\alpha_j ,\beta_j, \gamma_j$ for $j=1,...,\max{(p,q)}$. In case they are not defined, we set them equal to 0. 

\newpage

\begin{figure}[h]
%\vspace*{-0.5cm}
%\hspace*{-1.2cm}
%
\centering
\includegraphics[scale=0.5]{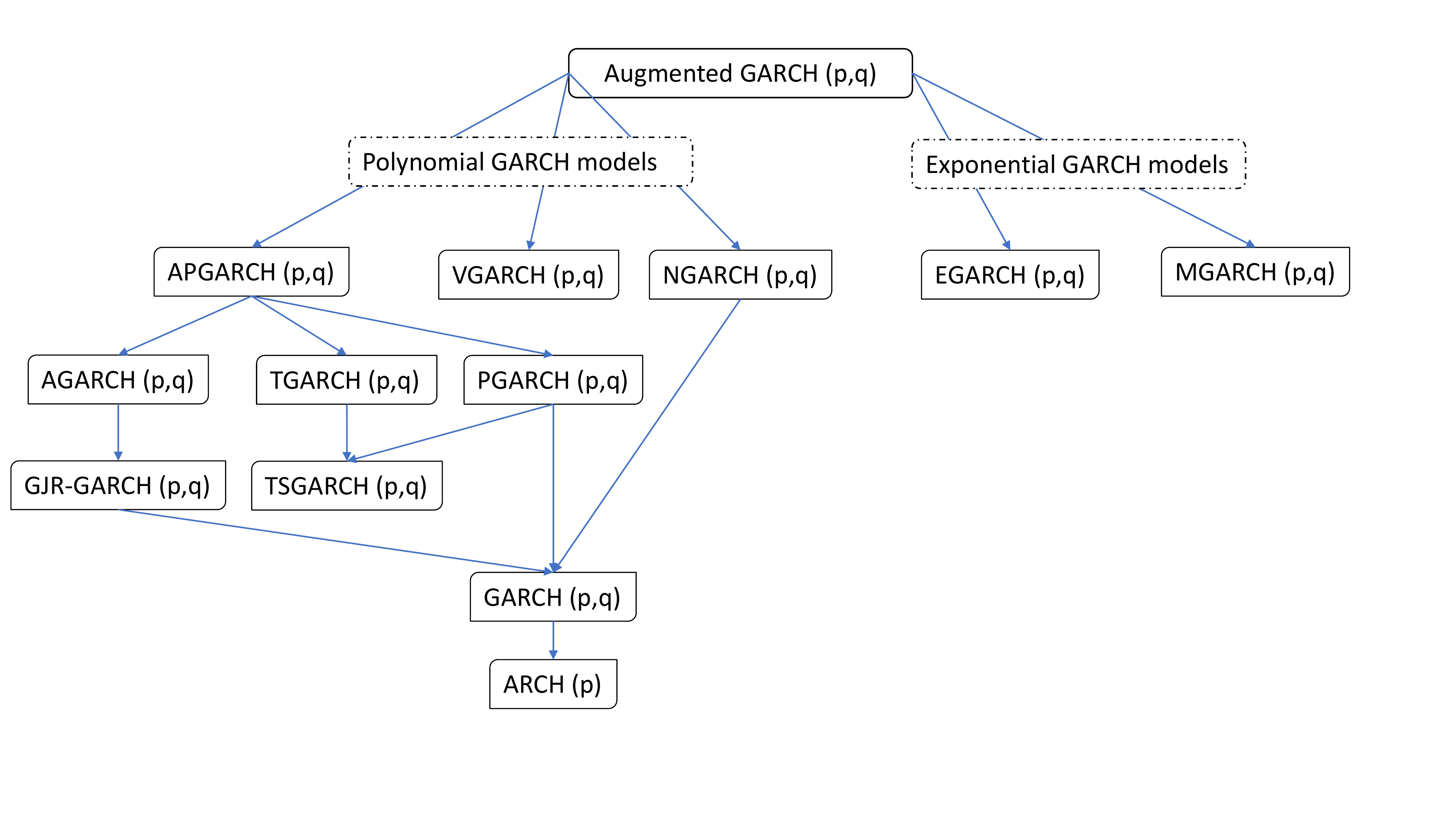}
%\hspace{-0.5cm}
\vspace{-1.7cm} 
\caption{\label{fig:GARCH_nested}\sf\small Schematic overview of the nesting of some augmented GARCH($p$, $q$) models.}
\end{figure}

%\begin{sideways}
\begin{table}[H]
{
\caption{\label{tbl:cond_bivariate-conv-pq} \sf\small  Conditions $(P_{\max{(1, r/\delta)}})$ or $(L_r)$ respectively translated for different augmented GARCH($p$,$q$) models for the general r-th absolute centred sample moment, $r \in \N$.}
\vspace{-3ex}
\begin{center}
%\hspace*{-2.cm}
\begin{tabular}{l | l }
\hline
\\[-1.5ex]
augmented \\ GARCH ($p$, $q$) & $r \in \N$  \\
\hline\hline
\\[-1.5ex]
%\hspace*{-5pt}$\alpha=0.95$ \\
\\[-2ex] APGARCH &  $\sum_{j=1}^{max(p,q)} \E[\lvert \alpha_j \left(\lvert \epsilon_0\rvert - \gamma_j \epsilon_{t-j}\right)^{2 \delta} + \beta_j \rvert^r]^{1/r} <1$ 
\\[1.5ex] AGARCH & $\sum_{j=1}^{max(p,q)} \E[\lvert \left( \alpha_j \lvert \epsilon_0\rvert - \gamma_j \epsilon_{t-j}\right)^{2 } + \beta_j \rvert^r]^{1/r} <1$ 
\\[1.5ex] GJR-GARCH & $\sum_{j=1}^{max(p,q)} \E[\lvert \alpha_j^{\ast} \epsilon_0^2 + \beta_j + \gamma_j^{\ast} \max(0,-\epsilon_0^2) \rvert^r]^{1/r} <1$
\\[1.5ex] GARCH & $\sum_{j=1}^{\max(p,q)} \E[(\alpha_j \epsilon_0^2 + \beta_j)^r]^{1/r} < 1 $ 
\\[1.5ex] ARCH & $\sum_{j=1}^{\max(p,q)} \alpha_j \E[ \epsilon_0^{2r}]^{1/r} < 1 $ 
\\[1.5ex] TGARCH & $\sum_{j=1}^{\max(p,q)} \E[\lvert \alpha_j \lvert \epsilon_{t-j} \rvert - \alpha_j \gamma_j \epsilon_{t-j} + \beta_j \rvert^r]^{1/r} < 1$
%we can take away the absolute value for $r=1$ as $\alpha_j \lvert \epsilon_{t-j} \rvert - \alpha_j \gamma_j \epsilon_{t-j}$ is non-negative a.s.
\\[1.5ex] TSGARCH & $\sum_{j=1}^{\max(p,q)} \E[\lvert \alpha_j \lvert \epsilon_{t-j} \rvert + \beta_j \rvert^r]^{1/r} < 1$
\\[1.5ex] PGARCH & $\sum_{j=1}^{max(p,q)} \E[\lvert \alpha_j \lvert \epsilon_0 \rvert +\beta_j\rvert^{2r}]^{1/(2r)} < 1$ 
\\[1.5ex] VGARCH & $\sum_{j=1}^q \beta_j < 1$
\\[1.5ex] NGARCH & $\sum_{j=1}^{\max(p,q)} \E[\lvert \alpha_j (\epsilon_0 + \gamma_j)^2 + \beta_j \rvert^r]^{1/r} < 1$ 
\\[1.5ex] MGARCH & $\E[\exp(4r \sum_{i=1}^p  \lvert \omega/p + \alpha_i \log(\epsilon_0^2)  \rvert^2)] < \infty$ and $\sum_{j=1}^q \lvert \beta_j \rvert <1$
\\[1.5ex] EGARCH & $\E[\exp(4r \sum_{i=1}^p  \lvert \omega/p + \alpha_i (\lvert\epsilon_0\rvert - \E\lvert \epsilon_0 \rvert) + \gamma_i \epsilon_0 \rvert^2] < \infty$ and $\sum_{j=1}^q \lvert \beta_j \rvert <1$
\\  [1ex]
\hline
\end{tabular}
\end{center}
}
%\end{sideways}
\end{table}

\end{appendices}

\end{document}